\documentclass[10pt]{amsart}
\usepackage{amsmath,amssymb,amsthm,setspace, times, fancyhdr, bussproofs, multicol, rotating, mathpazo}

\newtheorem{thm}{\sc Theorem}
\newtheorem{pro}[thm]{ \sc Proposition}
\newtheorem{lem}[thm]{\sc Lemma}
\sloppy
\theoremstyle{definition}
\newtheorem{defn}{\sc Definition}
\begin{document}

\title{ON ELIMINATION OF QUANTIFIERS IN SOME NON-CLASSICAL MATHEMATICAL THEORIES}

\author{Guillermo Badia,\; Andrew Tedder}
\maketitle

\begin{abstract}
Elimination of quantifiers is  shown to fail dramatically for a group of well-known mathematical theories (classically enjoying the property) against a wide range of relevant logical backgrounds. Furthermore, it is suggested that only by moving to more extensional underlying logics can we get the property back.

\bigskip

\emph{Keywords:} quantifier elimination, Routley-Meyer semantics, relevant logic, model theory, paraconsistent mathematics. 

\bigskip

\emph{Math subject classification:} Primary, 03C10,  	03B47 ; Secondary,  	03B53. 
\end{abstract}

\section{Introduction}

The property of \emph{quantifier elimination} for a theory has the following classical uses (\cite{Hodges93}, pp. 68-69): (1) classification of structures up to elementary equivalence, (2) completeness and decidability  proofs for theories, (3) description of elementary embeddings, and (4) description of definable relations in a structure.  In this paper, we chose to focus on the latter and, as in \cite{Marker02}, take it to be the fundamental import of quantifier elimination. Our aim  is to explore elimination of quantifiers in the non-classical setting of quantificational logics which are sound with respect to the Routley-Meyer semantic framework from relevant logic \cite{Routley82}.\footnote{Quantifier elimination has been recently studied  in \cite{Ellison08} for the also non-classical context of intuitionistic logic.} Such quantificational logics include many relevant and paraconsistent logics as well as, say, classical logic.

We should alert the reader that the concern of the present pages will be with \emph{mathematical theories} (theories describing some mathematical structure), so we will disregard things such as the pure theory of equality given that we will assume equality to be a logical notion with a fixed interpretation.\footnote{Indeed, as can be extracted from \S \ref{sec:qe}, if $=$ is taken as a non-logical symbol then the pure theory of equality will certainly fail to have QE.} Some examples of mathematical theories are the theory of \emph{dense linear orderings without endpoints}, the theory of \emph{real closed fields}, the theory of \emph{ordered divisible  abelian groups} and \emph{Presburger arithmetic}. All these theories are known to have quantifier elimination in the classical setting \cite{Marker02}. Here we investigate the fate of this property in the Routley-Meyer semantic framework. 

We also look at the minimal logical principles that can  be added to our formal systems to recover quantifier elimination. This is in line with some recent investigations into the non-classical models of mathematical theories.\footnote{Papers with results which are directly relevant to our work are \cite{Restall09, Mortensen88} and \cite{Ferguson12}. Other papers in relevant arithmetic are \cite{Meyer76}, \cite{MeyerMortensen84}, and \cite{MeyerFriedman92}. For the broader class of paraconsistent arithmetics, see \cite{Priest97}, \cite{Priest00}, \cite{ParisPathmanathan06}, and \cite{ParisSirokofskich08}.}  Our toy theory will be the theory of dense linear orderings without endpoints (henceforth, DLO), which is the simplest around.\footnote{Details on the \emph{classical} model theoretic properties of DLO, and other mathematical theories considered here can be found in \cite{Marker02}, or any standard model theory textbook. Our axioms for RCF  in \S 3.2 are found in \cite{Sacks10}.} 

This paper is (fundamentally) a contribution  to the study of paraconsistent mathematical theories (i.e., theories where from a sentence and its negation not everything follows). Our work is motivated by the need of understanding just how different the behaviour of these theories is from that of their classical counterparts. We would like to stress that, surprisingly,  most of the  research done in this field with logics having a detachable conditional has used algebraic models. In that sense, \cite{Restall09} seems to be the first (and likely only) place where Routley-Meyer relational models have been employed in the study of substructural mathematical theories as we will do below.

In \S \ref{sec:log}, we review the details of the Routley-Meyer semantics as well as introduce the fundamental definitions we will be working with. In \S \ref{math}, we present the theories we will be studying. In \S \ref{sec:fqe}, we show that quantifier elimination fails for well-known mathematical theories when the underlying logic is any of the most famous systems of relevant logic. In \S \ref{sec:qe}, we discuss how to get quantifier elimination back. Finally, in \S \ref{sec:con}, we summarize our results.

\section{Logical Preliminaries}\label{sec:log}

All the  languages we will be considering will have connectives $\neg,\land,\lor,\rightarrow,\forall,\exists,\bot,\top$ (with the usual arities) and a denumerable list of individual variables $x_0,x_1,\dots$. As usual, by the \emph{signature} of a language we will mean its relation, function and constant symbols. 

We will be presenting relevant logics semantically rather than syntactically. 
The reason for this is  that Fine \cite{Fine89} has shown that the natural proof-theoretic formulation of the well-know system \textbf{R} of relevant logic plus first order quantifiers is incomplete with respect to the Routley-Meyer semantic framework.\footnote{A semantics which brings completeness back is given in \cite{Mares06}.} Hence, if one desires to find a complete axiomatization for the systems we will call \textbf{B}, \textbf{R} and \textbf{RM} they would need to go beyond what is usually understand proof-theoretically by these calculi. It is not difficult to show, however, that our semantically defined systems are indeed recursively axiomatizable using Craig's method.

\subsection*{Models}

We employ the standard ternary relation semantics, also routinely called the ``Routley-Meyer'' semantics. A \textbf{B}-model is a structure $M=\langle W,R,D,^*,s,V\rangle$ where $W$ is a non-empty set (of worlds), $R\subseteq W^3$, $D$ is a set of objects (the domain), $^*: W \longrightarrow W$, $s \in W$, $V$ is a valuation such that if $w \in W$ and $P^n$ is an $n$-ary predicate, $V(w, P^n) \subseteq D^n$, and moreover:

\begin{itemize}

\item For any $\alpha\in W$, $Rs\alpha\alpha$
\item for any $\alpha,\beta,\gamma,\delta\in W$, $Rs\alpha\beta$ and $R\beta\gamma\delta$ implies that $R\alpha\gamma\delta$
\item For any $\alpha\in W$, $\alpha^{**}=\alpha$
\item For any $\alpha,\beta\in W$, if $Rs\alpha\beta$ then $Rs\beta^*\alpha^*$
\item If $R0\alpha\beta$, then for any $P^n$, $V(\alpha, P^n) \subseteq V(\beta, P^n)$.

\end{itemize}

The special predicate = will always be interpreted as the metatheoretic classical identity unless  stated otherwise.

 The additional ternary relation restrictions necessary for the logic \textbf{R}-mingle, or \textbf{RM}, are as follows (the axioms enforced by the principles are included):\footnote{Sans-serif uppercase letters give the Combinatory names of implication axioms, and are used throughout, in line with standard usage in the relevant literature. A good introduction to Combinatory logic can be found in \cite{Bimbo11}, and details regarding the correspondence between combinators and implicational formulae can be found in \cite{AndersonBelnapDunn92}[\S71] or in \cite{DunnRestall02}.}  

\begin{itemize}
\item For any $\alpha,\beta, \gamma \in W$, if $R\alpha\beta \gamma$ then $R\alpha \gamma^*\beta^*$  \hfill$(A\rightarrow B)\rightarrow (\neg B \rightarrow \neg A)$
\item For any $\alpha,\beta,\gamma,\delta\in W$, if $R\alpha\beta\gamma$ then $R\alpha\gamma\beta$. \hfill \textsf{CI}
\item For any $\alpha\in W$, $R\alpha\alpha\alpha$. \hfill\textsf{WI}
\item If there is an $x\in W$ s.t. $R\alpha\beta x$ and $Rx\gamma\delta$ then there is a $y\in W$ s.t. $R\alpha y\delta$ and $R\beta\gamma y$. \hfill \textsf{B}
\item If $R\alpha\beta\gamma$ then either $Rs\alpha\gamma$ or $Rs\beta\gamma$ \hfill $A\rightarrow(A\rightarrow A)$
\end{itemize} 

The correspondence between combinatory logic and propositional logic support the following pairs of combinators and valid formulae:

\begin{itemize}
\item[\textsf{B}] \hfill$(A\rightarrow B)\rightarrow ((C\rightarrow A)\rightarrow (C\rightarrow B))$
\item[\textsf{WI}] \hfill $((A\rightarrow B)\land A)\rightarrow B$
\item[\textsf{CI}] \hfill $A\rightarrow ((A\rightarrow B)\rightarrow B)$
\end{itemize}

Given a model $M$, we will use $\overline{a}$ to denote a sequence of individuals of the domain of $M$. Truth of complex formulae at a world is defined as follows:

\begin{itemize}
\item $M,w\vDash \bot$ never 
\item $M,w\vDash A\land B[\overline{a}]$ iff $M,w\vDash A[\overline{a}]$ and $M,w\vDash B[\overline{a}]$
\item $M,w\vDash (A\lor B)[\overline{a}]$ iff $M,w\vDash A[\overline{a}]$ or $M,w\vDash B[\overline{a}]$
\item $M,w\vDash\neg A [\overline{a}]$ iff $M,w^*\nvDash A[\overline{a}]$
\item $M,w\vDash (A\rightarrow B)[\overline{a}]$ iff if $Rww_1w_2$ and $M,w_1\vDash A[\overline{a}]$ then $M,w_2\vDash B[\overline{a}]$
\item $M,w\vDash\forall xA[\overline{a}]$ iff for any $y\in D$, $M,w\vDash A[y/x, \overline{a}]$
\item $M,w\vDash\exists xA[\overline{a}]$ iff for some $y\in D$, $M,w\vDash A[y/x, \overline{a}]$
\end{itemize}

\begin{defn}
 For any logic \textbf{L} discussed here, set of formulas $\Gamma$ and formula $A$, we will say that $A$\emph{ is deducible from} $\Gamma$ in {\bf L} or, in symbols,  $\Gamma \vDash_{\bf L} A$ when for every {\bf L}-model $M$, we have that $M,s\vDash \Gamma$ only if $M,s \vDash A$. So deducibility only cares about what happens at the distinguished world  $s$. In particular if $\Gamma = \emptyset$ we get a definition of theoremhood in {\bf L}. In what follows, we will drop the subscript in $\vDash_{\bf L}$ when the context makes clear about what system we are talking.

\end{defn}

\begin{defn}
A theory $T$ \emph{admits quantifier elimination} if, for any formula $A$, there is a quantifier free formula $B$ such that $T\vDash A\leftrightarrow B$.
\end{defn}

Next we make a couple of important observations which will be used (often without explicit mention) throughout the paper. 

\begin{pro} \emph{(Hereditary condition)} Let M be a \textbf{B}-model.  If $Rs\alpha \beta$ and $A(x)$ is any formula, $M, \alpha \vDash A[a]$ only if $M, \beta \vDash A[a]$.\end{pro}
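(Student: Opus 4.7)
The plan is a straightforward induction on the complexity of the formula $A$, appealing in each non-trivial case to exactly one of the frame conditions imposed in the definition of a $\mathbf{B}$-model. I would state the claim uniformly as: for every formula $A$ and every parameter sequence $\overline{a}$, if $Rs\alpha\beta$ and $M,\alpha \vDash A[\overline{a}]$ then $M,\beta \vDash A[\overline{a}]$, and then argue by induction on the build-up of $A$.

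The base case splits according to whether the atom is $\bot$, a predicate atom, or an equality. The $\bot$ case is vacuous. For a predicate atom $P(\overline{a})$, the statement is simply the last clause in the definition of a $\mathbf{B}$-model (i.e.\ $V(\alpha,P^n) \subseteq V(\beta,P^n)$ whenever $Rs\alpha\beta$; the ``$R0\alpha\beta$'' in the excerpt is plainly $Rs\alpha\beta$). Equality is a logical symbol whose interpretation does not depend on the world, so the clause for $=$ is preserved trivially. The conjunction, disjunction, and quantifier cases follow at once from the inductive hypothesis since their truth clauses are pointwise.

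The only two cases that use the frame conditions non-trivially are negation and implication. For $A = \neg B$: assume $M,\alpha \vDash \neg B[\overline{a}]$, i.e.\ $M,\alpha^* \nvDash B[\overline{a}]$. From $Rs\alpha\beta$ and the axiom ``if $Rs\alpha\beta$ then $Rs\beta^*\alpha^*$'' I obtain $Rs\beta^*\alpha^*$, and then the contrapositive of the inductive hypothesis applied to $B$ at $\beta^*$ and $\alpha^*$ yields $M,\beta^* \nvDash B[\overline{a}]$, i.e.\ $M,\beta \vDash \neg B[\overline{a}]$. For $A = B \rightarrow C$: assume $M,\alpha \vDash (B \rightarrow C)[\overline{a}]$ and take arbitrary $w_1, w_2$ with $R\beta w_1 w_2$ and $M,w_1 \vDash B[\overline{a}]$; the axiom ``$Rs\alpha\beta$ and $R\beta\gamma\delta$ imply $R\alpha\gamma\delta$'' gives $R\alpha w_1 w_2$, and the truth clause for the conditional at $\alpha$ then delivers $M,w_2 \vDash C[\overline{a}]$.

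I do not anticipate a real obstacle here; the negation case is the most delicate only because it requires applying the inductive hypothesis contrapositively at the starred worlds, so one must be careful to verify that the relevant frame condition is available in $\mathbf{B}$ (it is, as the fourth bullet in the model definition). All the other steps are routine. Since every step used appears already in the $\mathbf{B}$-model definition, the same argument automatically covers $\mathbf{R}$ and $\mathbf{RM}$, which only add further frame conditions.
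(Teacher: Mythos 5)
Your proof is correct, and it is exactly the standard heredity-lemma induction that the paper leaves implicit (the proposition is stated there as an observation without proof, with $R0\alpha\beta$ indeed a typo for $Rs\alpha\beta$, as you note). The negation case via $Rs\beta^*\alpha^*$ and the contrapositive of the inductive hypothesis, and the implication case via the transitivity-style postulate (which, as your argument shows, does not even need the inductive hypothesis), are precisely the intended steps.
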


\begin{pro} Let M be a \textbf{B}-model.  Then $M, s \vDash A \rightarrow B [a]$ iff for any $\alpha$, $M, \alpha \vDash A  [a]$ only if  $M, \alpha \vDash B [a]$. \end{pro}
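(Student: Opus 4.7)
The plan is to prove both directions directly from the truth clause for $\rightarrow$, leveraging exactly one frame condition for each direction: the reflexivity-like property $Rs\alpha\alpha$ for the left-to-right implication, and the previously stated Hereditary Condition for the converse.

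For the forward direction, I would assume $M, s \vDash (A \rightarrow B)[a]$ and pick an arbitrary world $\alpha$ with $M, \alpha \vDash A[a]$. The first clause in the definition of a \textbf{B}-model guarantees $Rs\alpha\alpha$, so the truth clause for $\rightarrow$ instantiated with $w_1 = w_2 = \alpha$ yields $M, \alpha \vDash B[a]$. This is essentially the only way to extract a ``local'' consequence from the global conditional at $s$.

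For the backward direction, I would assume the universal ``point-wise'' implication and verify the truth clause for $(A \rightarrow B)$ at $s$. Fix arbitrary $w_1, w_2$ with $Rsw_1w_2$ and $M, w_1 \vDash A[a]$. Here the Hereditary Condition (the immediately preceding proposition) gives $M, w_2 \vDash A[a]$, and then the hypothesis applied at $\alpha = w_2$ produces $M, w_2 \vDash B[a]$, as required.

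The only real subtlety is that the Hereditary Condition is stated for arbitrary formulas, whereas the heredity clause built into the definition of a \textbf{B}-model only constrains atomic valuations; so the previous proposition is doing the work of lifting that monotonicity to complex formulas by induction on formula complexity. Once that is in hand, this lemma is essentially immediate, and I would present it as a two-direction argument of just a few lines.
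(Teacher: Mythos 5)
Your proof is correct: the forward direction via $Rs\alpha\alpha$ and the backward direction via the Hereditary Condition (applied to $A$ along $Rsw_1w_2$, then the pointwise hypothesis at $w_2$) is exactly the intended argument, and your remark that the Hereditary Condition is what lifts the atomic heredity clause to arbitrary formulas is on target. The paper states this proposition without proof as a routine observation, so there is nothing further to compare.
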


\begin{defn} 
Given some equivalence relation $\equiv$ between models, where $N\equiv M$ implies that all sentences true in one of the models are also true in the other, a theory $T$ is \emph{$\omega$-categorical with respect to $\equiv$} if for any countable models $N,M\vDash T$, $N \equiv M$.  
\end{defn}

\begin{defn}
A theory $T$ is \emph{negation-complete} if for any formula $A$, either $T\vDash A$ or $T\vDash\neg A$. 
\end{defn}

\begin{defn}
A theory $T$ is \emph{complete} if for any models $M, N$ of $T$, $M$ satisfies exactly the same sentences as $N$.
\end{defn}

Quantifier elimination is a model-theoretic property in the sense that it considerably simplifies the theory of the definable sets of the models of a given theory: these amount to the things one can define using quantifier free formulas.

\section{Some mathematical theories}\label{math}

In this section, we will review the axiomatizations of a our target mathematical theories. The theories as presented are, of course, not the classical theories which go by these names, as the underlying logic differs from classical logic. However, we retain (roughly) the same axioms as the classical theories, though stated in non-classical vocabulary. We shall be focused on properties had by classical formal theories which are lost when the underlying logic is relevant, and so we shall use the usual names for the classical theories to make explicit the comparison.

\subsection*{Dense Linear Orderings}

 In a signature with a binary relation symbol $<$, DLO without endpoints is given by the following axioms:\footnote{DLO with a paraconsistent underlying logic has been studied  in \cite{Ferguson12} as formulated in the $\rightarrow$-free fragment of our language (where $A \rightarrow B$ is replaced by $\neg A \vee B$).}

\begin{enumerate}
\item[A1] $\forall x(x<x\rightarrow\bot)$
\item[A2] $\forall x,y(x=y\lor x<y\lor y<x)$
\item[A3] $\forall x,y,z((x<y\land y<z)\rightarrow x<z)$
\item[A4] $\forall x,y(x<y\rightarrow\exists z(x<z\land z<y))$
\item[A5] $\forall x\exists y(x<y)$
\item[A6] $\forall x\exists y(y<x)$
\end{enumerate}

If DLO is formulated with $\forall  x \neg (x<x)$ instead of A1 it is a simple matter to build a one element model for it, so the following result (though easy) is a necessary preliminary.
 
\begin{pro} DLO in any logic \textbf{L} extending \textbf{B} has no finite models.
\end{pro}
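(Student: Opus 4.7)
The plan is to show that in any $\mathbf{B}$-model $M$ of DLO, the interpretation of $<$ at the base world $s$ is irreflexive, transitive, and has no maximum, whence the domain $D$ must be infinite by a standard chain argument. The key enabling observation is the $\mathbf{B}$-frame condition $Rs\alpha\alpha$ specialized to $Rsss$: this is what converts implications asserted at $s$ into ordinary at-$s$ consequences, and it is the only bit of Routley-Meyer machinery that will need to be invoked.

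First, I would extract from A1 the fact that $M,s \nvDash a<a$ for every $a \in D$: since $M,s \vDash a<a \rightarrow \bot$, the clause for $\rightarrow$ applied at $w_1 = w_2 = s$ (via $Rsss$) together with the fact that $\bot$ is nowhere satisfied rules out $M,s \vDash a<a$. Second, the same trick turns A3 into transitivity at $s$: if $M,s \vDash a<b$ and $M,s \vDash b<c$, then (using the clause for $\land$ at $s$ to rebuild the conjunctive antecedent, and $Rsss$ again) $M,s \vDash a<c$. Third, A5 evaluated at $s$ supplies, for each $a \in D$, some $b \in D$ with $M,s \vDash a<b$.

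With these three facts in hand, I would fix any $a_0 \in D$ and recursively apply A5 to build a sequence $a_0, a_1, a_2, \ldots$ in $D$ with $M,s \vDash a_i < a_{i+1}$ for each $i$. Iterated transitivity gives $M,s \vDash a_i < a_j$ whenever $i<j$, and irreflexivity then forces all the $a_i$ to be distinct, so $D$ is infinite. The only delicate point, and the main thing that requires explicit verification, is that the arrows in A1 and A3 really do license these modus-ponens-style inferences at $s$; fortunately the condition $Rs\alpha\alpha$ suffices for both, and since no further frame constraints are invoked, the argument lifts unchanged to every extension of $\mathbf{B}$.
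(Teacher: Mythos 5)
Your proposal is correct and matches the paper's own (sketched) argument exactly: both build an infinite ascending chain at the base world $s$ using only axioms A1, A3 and A5, with the frame condition $Rs\alpha\alpha$ instantiated as $Rsss$ doing the work of licensing the modus-ponens steps. Your write-up simply fills in the details the paper leaves to the reader.
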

\begin{proof} This is easily seen essentially as in the classical case by constructing an infinitely ascending (descending) sequence (in the ordering holding at world $s$)  of distinct elements of the domain of objects of any  \textbf{B}-model of DLO. In the reasoning one needs to use the property $Rsss$ of \textbf{B}-models in an essential way but only axioms A1, A5 (or alternatively A6) and A3.
\end{proof}

We have to insist that the failure of QE for DLO (which will be established below in \S\ref{sec:fqe})  holds, \emph{a fortiori}, for DLO with $\forall x \neg (x<x)$ replacing A1. We do not have a direct proof of the failure of QE for such alternative formulation of DLO. A similar remark applies to all the theories we will examine in this paper, hence whenever $\neg A$ occurs in a standard classical axiomatisation of some theory, we shall replace this with $A\rightarrow\bot$. For the QE result when the underlying logic is strengthened, DLO in its current form looks like what is required. 

\subsection*{Real Closed Fields}

The signature of the real closed fields has a binary predicate symbol $<$,  binary function symbols $+, \times$, unary function symbols $-, ^{-1}$, and constants $0, 1$. The theory RCF has the following axioms:\footnote{For a different (essentially second order) approach to $\mathbb{R}$ in a paraconsistent setting, see \cite{WeberJordens12}.}

\begin{enumerate}
\item[A1] $\forall x(x<x\rightarrow\bot)$
\item[A2] $\forall x,y(x=y\lor x<y\lor y<x)$
\item[A3] $\forall x,y,z((x<y\land y<z)\rightarrow x<z)$
\item[A4] $\forall x,y((0<x \land 0 < y) \rightarrow 0 < xy)$
\item[A5] $\forall x,y, z(x< y \rightarrow  x+z<y+z)$
\item[A6] $\forall x \exists y (0 < x \rightarrow x = yy)$
\item[A7] $\forall x_1, \dots, x_n \exists y (y^n+x_1y^{n-1}+ \dots + x_n =0)$ for each odd $n > 0$.
\item[A8] $\forall x, y, z ((x+y)+z=x+(y+z))$
\item[A9] $\forall x (x+0=x)$
\item[A10] $\forall x (x+(-x) =0)$
\item[A11] $\forall x, y (x+y=y+x)$
\item[A12] $\forall x, y ((xy)z = x(yz))$
\item[A13] $\forall x (x1 = x)$
\item[A14] $\forall x (\neg(x = 0) \rightarrow xx^{-1} = 1)$
\item[A15] $\forall x, y(xy = yx)$
\item[A16] $\forall x, y, z (x(y+z) = (xy)+(xz))$
\item[A17]    $0 = 1 \rightarrow \bot$     
\end{enumerate}

\subsection*{Presburger arithmetic}

The signature of Presburger arithmetic has a binary predicate symbol $<$, unary predicates $P_n$ ($n > 1$), a  binary function symbol $+$,  and constants $0, 1$. The axioms are as follows:

\begin{enumerate}
\item[A1] $\forall x(x<x\rightarrow\bot)$
\item[A2] $\forall x,y(x=y\lor x<y\lor y<x)$
\item[A3] $\forall x,y,z((x<y\land y<z)\rightarrow x<z)$

\item[A4] $\forall x,y, z(x< y \rightarrow  x+z<y+z)$

\item[A5] $\forall x, y, z ((x+y)+z=x+(y+z))$
\item[A6] $\forall x (x+0=x)$

\item[A7] $\forall x, y (x+y=y+x)$

\item[A8]    $0 < 1 $     

\item[A9]  $\forall x((x=0 \lor x<0)\lor (1<x \lor x =1))$   

\item[A10] $\forall x (P_n(x) \leftrightarrow \exists y (x = \underbrace{y + \dots + y}_{n-\emph{times}})) $ for $n > 1$

\item[A11] $\forall x  (\bigvee_{i=0}^{n-1} (P_n(x + \underbrace{1 + \dots + 1}_{i-\emph{times}}) \land \bigwedge_{i \neq j} (P_n(x + \underbrace{1 + \dots + 1}_{j-\emph{times}}) \rightarrow \bot)))  $ for $n > 1$

\end{enumerate}

\subsection*{Divisible Ordered Abelian Groups}

The signature of the theory of divisible ordered  abelian groups (DOAG) has a binary predicate symbol $<$, a unary function symbol $-$, a  binary function symbol $+$,  and the constants $0$. Here is the list of axioms: 

\begin{enumerate}
\item[A1] $\forall x(x<x\rightarrow\bot)$
\item[A2] $\forall x,y(x=y\lor x<y\lor y<x)$
\item[A3] $\forall x,y,z((x<y\land y<z)\rightarrow x<z)$

\item[A4] $\forall x,y, z(x< y \rightarrow  x+z<y+z)$

\item[A5] $\forall y \exists x (\underbrace{x+x \dots x}_{n-times} = y)$ for $n > 1$.
\item[A6] $\forall x, y, z ((x+y)+z=x+(y+z))$
\item[A7] $\forall x (x+0=x)$
\item[A8] $\forall x (x+(-x) =0)$
\item[A9] $\forall x, y (x+y=y+x)$

\end{enumerate}

\subsection*{Algebraically Closed Fields}\label{sec:acf}

The signature of the algebraically closed fields has   binary function symbols $+, \times$, unary function symbols $-, ^{-1}$, and constants $0, 1$. The theory ACF has the following axioms:

\begin{enumerate}

\item[A1] $\forall x, y, z ((x+y)+z=x+(y+z))$
\item[A2] $\forall x (x+0=x)$
\item[A3] $\forall x (x+(-x) =0)$
\item[A4] $\forall x, y (x+y=y+x)$
\item[A5] $\forall x, y ((xy)z = x(yz))$
\item[A6] $\forall x (x1 = x)$
\item[A7] $\forall x (\neg(x = 0) \rightarrow xx^{-1} = 1)$
\item[A8] $\forall x, y(xy = yx)$
\item[A9] $\forall x, y, z (x(y+z) = (xy)+(xz))$
\item[A10]    $0 = 1 \rightarrow \bot$ 
\item[A11] $\forall x_1, \dots, x_n \exists y (y^n+x_1y^{n-1}+ \dots + x_n =0)$ for each $n > 0$.    
\end{enumerate}

\section{Failure of Quantifier Elimination}\label{sec:fqe}

In this section we study the failure of QE for most of the theories from  \S \ref{math}. Note that we employ the following seriality condition: $\forall x\exists y,z (Rxyz)$. This is need for the proof, but is not motivated by any considerations other than that it is so needed. We have not discovered a means of obtaining this important result without the seriality condition.

\begin{lem}\label{1}
Let \textbf{L} be any logic  extending \textbf{B} such that its frames satisfy the condition $\forall x \exists y, z (Rxyz)$ (a seriality requirement). Then if $A(x)$ is a quantifier free formula with one free variable in DLO without endpoints, either $A(x)\leftrightarrow x=x$ or $A(x)\leftrightarrow x<x$.
\end{lem}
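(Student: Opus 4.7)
The plan is to argue by induction on the structure of the quantifier free formula $A(x)$, showing at each stage that $A(x)$ is either true at every world of every DLO model under every assignment to $x$ (in which case $A(x)\leftrightarrow x=x$), or false at every world of every DLO model (in which case $A(x)\leftrightarrow x<x$). Combined with Proposition 2, this dichotomy will suffice to conclude the corresponding biconditional holds at $s$ under arbitrary assignments.

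First I would settle the base cases. Because the signature of DLO has no constants or function symbols, the only terms are variables, so with a single free variable $x$ the only atomic formulas are $x=x$ and $x<x$ (besides $\bot$ and $\top$). The formula $x=x$ is true at every world because $=$ is interpreted as the classical metatheoretic identity, while $x<x$ is false at every world: by axiom A1 we have $M,s\vDash (a<a)\rightarrow\bot$, so by Proposition 2, $M,\alpha\vDash a<a$ would force $M,\alpha\vDash\bot$, contradicting the semantics of $\bot$. The constants $\top$ and $\bot$ are of course handled directly.

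Next I would carry out the inductive step. Under the hypothesis that each immediate subformula of $A(x)$ is either true everywhere or false everywhere, the cases of $\land$ and $\lor$ are completely routine, while $\neg A$ follows from the clause $M,w\vDash\neg A$ iff $M,w^*\nvDash A$ together with the fact that $^*$ is an involution on $W$ (so $w^*$ ranges over all of $W$ as $w$ does); hence ``$A$ everywhere true'' translates to ``$\neg A$ everywhere false'' and vice versa.

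The main obstacle will be the case of $A\rightarrow B$. Three of the four sub-cases are vacuous: if $A$ is false everywhere, or if $B$ is true everywhere, then $A\rightarrow B$ is true at every world directly from the truth condition for the conditional. The genuinely delicate case is when $A$ is true everywhere and $B$ is false everywhere, where I must show that $A\rightarrow B$ is actually refuted at each $w$. This is precisely where the seriality assumption $\forall x\exists y,z(Rxyz)$ is needed: for an arbitrary $w$ it furnishes witnesses $w_1,w_2$ with $Rww_1w_2$, so that $A$ holds at $w_1$ while $B$ fails at $w_2$, refuting $A\rightarrow B$ at $w$. Without seriality there is no guarantee of such witnesses at arbitrary worlds, and the induction would collapse --- explaining why the authors flag this condition as essential to their argument.
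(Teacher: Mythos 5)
Your proposal is correct and follows essentially the same route as the paper: a structural induction over quantifier-free formulas with the same case analysis, the same base case observation that the only atoms are $x=x$ and $x<x$, and the same crucial use of seriality to refute the conditional in the ``antecedent everywhere true, consequent everywhere false'' subcase. The only difference is presentational: you carry the inductive invariant semantically (true at every world of every DLO model versus false at every world), whereas the paper chains valid biconditionals reducing $A(x)$ to $x=x$ or $x<x$; by Proposition 2 and the fixed status of $x=x$ and $x<x$ these amount to the same thing.
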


\begin{proof}
We proceed by structural induction on $A(x)$:

\bigskip
\noindent Base: $A(x)$ is atomic; hence $A(x)$ is either $x=x$ or $x<x$.

\bigskip
\noindent Induction step: We proceed by cases on the main connective of $A(x)$, where the induction hypothesis guarantees that any component formula $B(x)$ of $A(x)$ are such that either $B(x)\leftrightarrow x=x$ or $B(x)\leftrightarrow x<x$.

\bigskip
\noindent Case 1: $A(x)$ is $B(x)\land C(x)$. By induction hypothesis, we know that any of the following four may be true of $B(x)$, $C(x)$.

\begin{itemize}
\item[(i)] $B(x)\leftrightarrow x=x$,
\item[(ii)] $B(x)\leftrightarrow x<x$,
\item[(iii)] $C(x)\leftrightarrow x=x$,
\item[(iv)] $C(x)\leftrightarrow x<x$.
\end{itemize}

So, we have four possibilities to consider:

\begin{enumerate}
\item (i) and (iii)
\item (i) and (iv)
\item (ii) and (iii)
\item (ii) and (iv)
\end{enumerate}

Suppose (i) and (iii) hold. Then $(B(x)\land C(x))\leftrightarrow (x=x\land x=x)\leftrightarrow x=x$. The result for (ii) and (iv) is similar.

Suppose (i) and (iv) hold. Then $(x=x\land x<x)\leftrightarrow (B(x)\land C(x))$. So $(B(x)\land C(x))\leftrightarrow x<x$ given that $(x=x\land x<x)\leftrightarrow x<x$ (the right to left implication follows using A1). The result for (ii) and (iii) is similar.

\bigskip
\noindent Case 2: $A(x)$ is $B(x)\lor C(x)$. By inductive hypothesis we have the same set of possibilities as above.

Suppose that (i), (iii) hold. As in Case 1, $(x=x\lor x=x)\leftrightarrow x=x$. For (ii), (iv), $(x<x\lor x<x) \leftrightarrow x<x$.

Suppose that (i), (iv) hold. That is, $(B(x)\lor C(x))\leftrightarrow (x=x\lor x<x)$. Well, $x=x\rightarrow(x=x\lor x<x)$. Obviously, $x=x\rightarrow x=x$; we need merely note that by A1 and the transitivity of $\rightarrow$, $x<x\rightarrow x=x$. Given this, we have $(x=x\rightarrow x=x)\land(x<x\rightarrow x=x)$, and we have as an instance of an axiom of \textbf{B} that $((x=x\rightarrow x<x)\land(x<x\rightarrow x=x))\rightarrow((x=x\lor x<x)\rightarrow x=x)$, therefore $(x=x\lor x<x) \rightarrow x=x$. So $(B(x)\lor C(x))\leftrightarrow x=x$.

\bigskip
\noindent Case 3: Suppose that $A(x)$ is $\neg B(x)$. By inductive hypothesis, either $B(x)\leftrightarrow x=x$ or $B(x)\leftrightarrow x<x$. Suppose that $B(x)\leftrightarrow x=x$. Then $A(x)\leftrightarrow\neg x=x$. It is easy to check that $\neg x=x\rightarrow\bot$ is a theorem (recall that the interpretation of $=$ has been fixed at every world) and $\bot\rightarrow x<x$, hence $A(x)\leftrightarrow x<x$ using A1. Suppose that $B(x)\leftrightarrow x<x$. Then $A(x)\leftrightarrow\neg x<x$. By A1, contraposition, the fixed interpretation of $=$ which makes $x=x \leftrightarrow \top$ hold, and transitivity of $\rightarrow$, it follows that $x=x\leftrightarrow\neg x<x$, hence $A(x)\leftrightarrow  x=x$. 

\bigskip
\noindent Case 4: Suppose that $A(x)$ is $B(x)\rightarrow C(x)$. Again, we have the possibilities (1)--(4) as in Case 1. 

Suppose that (i), (iii) hold. Then $(B(x)\rightarrow C(x))\leftrightarrow(x=x\rightarrow x=x)$. Since $x=x\leftrightarrow\top$, we know that $(\top\leftrightarrow\top)\leftrightarrow(x=x\leftrightarrow x=x)$. Similarly, $(\top\leftrightarrow\top)\leftrightarrow\top$, and so $(x=x\leftrightarrow x=x)\leftrightarrow x=x$, and $A(x)\leftrightarrow x=x$.

Suppose that (ii), (iv) hold. Then $(B(x)\rightarrow C(x))\leftrightarrow(x<x\rightarrow x<x)\leftrightarrow(\bot\leftrightarrow\bot)\leftrightarrow\top\leftrightarrow x=x$. So $A(x)\leftrightarrow x=x$.

Suppose that (i), (iv) hold. Then $(B(x)\rightarrow C(x))\leftrightarrow(x=x\rightarrow x<x)$. We know that $(x=x\rightarrow x<x)\leftrightarrow (\top\rightarrow\bot)$. Note that using the seriality condition on our frames,   $(\top\rightarrow\bot)\leftrightarrow\bot$ must hold. It suffices to note that never $\top\rightarrow\bot$ holds at any world $\alpha$ of any model, for otherwise, using seriality,  $\bot$ would have to hold at some world, which is impossible.  Hence,  $(x=x\rightarrow x<x)\leftrightarrow\bot$, and the definition of $\bot$ gives us that $\bot\leftrightarrow x<x$, so $A(x)\leftrightarrow x<x$.

Suppose that (ii), (iii) hold. Then $(B(x)\rightarrow C(x))\leftrightarrow(x<x\rightarrow x=x)$ and we know that $(x<x\rightarrow x=x)\rightarrow x=x$. Similarly, we know that $x=x\rightarrow(x<x\rightarrow x=x)$ given that $x<x\rightarrow x=x$ holds at every world in every model, and so $A(x)\leftrightarrow x=x$.

\end{proof}

\begin{thm}  Let \textbf{L} be any relevant logic extending \textbf{B} up to \textbf{RM}. Then QE fails for DLO in \textbf{L}.
\end{thm}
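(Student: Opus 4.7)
My plan is to invoke Lemma 1 to reduce the problem to producing a single formula. Were DLO to admit quantifier elimination in $\textbf{L}$, then the one free variable formula $A(x) := \exists y(x < y)$ would have to be DLO-equivalent to some quantifier free formula, which by Lemma 1 would in turn be equivalent to either $x = x$ or $x < x$. So it suffices to show that neither equivalence holds by constructing a single countermodel of DLO in $\textbf{L}$. The natural choice of $A(x)$ is motivated by the fact that its classical DLO equivalent is $\top$ on account of axiom A5.

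The intended model has two worlds, $W = \{s, \alpha\}$, domain $D = \mathbb{Q}$, the star operation equal to the identity, valuation $V(s, <)$ the usual strict order on $\mathbb{Q}$ and $V(\alpha, <) = \emptyset$, and accessibility relation $R = \{(s,s,s),(s,\alpha,\alpha),(\alpha,\alpha,\alpha)\}$. The key design choice is to \emph{exclude} $(s,s,\alpha)$ from $R$: without this triple, the heredity condition places no constraint forcing $V(s,<) \subseteq V(\alpha,<)$, so the latter may safely be empty even though the former is rich. The triples $Rs\alpha\alpha$ and $Rsss$ are kept because the first is demanded by the base frame condition and the second is what will let us refute $A(x) \to x<x$.

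I would then verify three things: that the frame satisfies all conditions of \textbf{RM} (hence of every \textbf{L} in the range), that DLO holds at $s$, and that the two target non-equivalences hold at $s$. The first two verifications are essentially trivial: every triple in $R$ has the form $Rsww$ or $Rwww$, so the ternary frame conditions for \textbf{RM}, the star conditions, seriality, and heredity all reduce to simple casework; the quantificational axioms A2, A5, A6 are directly witnessed by $\mathbb{Q}$; and each DLO axiom containing $\to$ (A1, A3, A4) is validated at $s$ because its $<$-atomic antecedents are vacuously false at $\alpha$. For the non-equivalences, the triple $Rs\alpha\alpha$ together with $V(\alpha, <) = \emptyset$ refutes $x = x \to A(x)$ at $s$, while the triple $Rsss$ together with A5 refutes $A(x) \to x < x$ at $s$.

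The main obstacle, such as it is, is ensuring that all the DLO axioms genuinely hold at $s$ despite the presence of a deviant non-$s$ world. This would be straightforward if the axioms were purely universal or existential, but A1, A3 and A4 each contain an occurrence of $\to$, and the accessibility and heredity structure must be chosen so that evaluating those conditionals does not force us to validate too much at $\alpha$. The combination of selecting $V(\alpha, <) = \emptyset$ and omitting $Rss\alpha$ from $R$ is exactly what makes the implicational axioms collapse to vacuities at $\alpha$ while still leaving $\alpha$ reachable enough to refute $x = x \to A(x)$.
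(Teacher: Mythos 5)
Your proof is correct and follows the same skeleton as the paper's: reduce via Lemma \ref{1} to showing that one formula with a single free variable is equivalent neither to $x=x$ nor to $x<x$, and witness this with a two-world Routley--Meyer model satisfying the \textbf{RM} frame conditions (hence a model for every \textbf{L} in the range), with DLO holding at $s$ and a deviant second world. I checked your frame: with $R=\{(s,s,s),(s,\alpha,\alpha),(\alpha,\alpha,\alpha)\}$ and identity star, the \textbf{B} postulates, contraposition, \textsf{CI}, \textsf{WI}, the \textsf{B}-postulate, the mingle condition and seriality all hold, heredity imposes nothing between $s$ and $\alpha$ since $Rss\alpha\notin R$, and DLO is indeed validated at $s$; your two refutations at $s$ go through. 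The differences from the paper are worth noting. First, the paper's witness $\neg\exists x(x<y)\land(\forall x(x=x)\rightarrow\exists x(y<x))$ defines the set $\{2\}$ at $s$, i.e.\ a set that no quantifier-free formula can define (those define only $\varnothing$ or the whole domain), so the failure there is \emph{extensional}; this is what underwrites the authors' later remark that even the weaker, interdeducibility-style notion of QE fails. Your $\exists y(x<y)$ is coextensive with $x=x$ at $s$ in every model of DLO (by A5), and the equivalence fails only because the relevant biconditional is sensitive to the deviant world $\alpha$; this suffices for the theorem as stated (QE is defined via the relevant biconditional) but would not support the stronger interdeducibility remark. Second, like the paper, you gloss the point that Lemma \ref{1} needs seriality: for \textbf{L} without serial frames (e.g.\ \textbf{B} itself) one should note that \textbf{RM}-models are \textbf{L}-models, transfer the putative equivalence $\mathrm{DLO}\vDash_{\mathbf L} A\leftrightarrow B$ to $\vDash_{\mathbf{RM}}$, apply Lemma \ref{1} there, and then contradict it in your model, which is an \textbf{RM}-model; making that half-sentence explicit would close the only loose end, and it is a loose end the paper shares.
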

\begin{proof}
\bigskip
Let $N=\langle W,D,R,^*,s,V\rangle$ the model   where:

\begin{itemize}
\item $W=\{s,t\}$
\item $D=\mathbb{Q}$
\item $R=\{\langle s,s,s\rangle,\langle s,t,t\rangle,\langle t,s,s\rangle,\langle t,s,t\rangle,\langle t,t,s\rangle,\langle t,t,t\rangle, \langle s,t,s\rangle\}$
\item $V$ assigns formulae containing $<$ the values consonant with the usual ordering on $Q$ at world $s$. At world $t$, $V$ assigns the usual ordering on $<$ for rationals in the interval $[2,3]$ and assigns false to any atomic formula involving rationals outside this interval. At every world, $=$ is interpreted as real identity.
\item $s$ is the designated world.
\item $^*=\{\langle s,t\rangle,\langle t,s\rangle\}$
\end{itemize}
The reader can easily convince themselves that DLO is validated at $s$. On the other hand, it is a tedious, but  mechanical  process to check that $N$ satisfies the frame conditions to validate the logic \textbf{RM}.\footnote{It has been verified using PROVER9, a computer proof assistant McCune \cite{Prover9}}

\bigskip
The definition of $\{2\}$ at world $s$ is the formula
\begin{center}
 $\neg\exists x(x<y)\land(\forall x (x=x)\rightarrow\exists x(y<x))$.
\end{center}

 This is due to the fact that for any $a$, 
\begin{center}
$N,s\vDash \neg\exists x(x<y)\land(\forall x (x=x)\rightarrow\exists x(y<x))[a]$ iff $a=2$.
\end{center}
 First, it is easy to see that $N,s\vDash \neg\exists x(x<y)\land(\forall x (x=x)\rightarrow\exists x(y<x))[2]$ since $N,t\nvDash \exists x(x<y)[2]$ by definition of $N$ and  $N,s\vDash\forall x (x=x)\rightarrow\exists x(y<x)[2]$ given that  $N,w\vDash \exists x(y<x)[2]$ for any $w\in W$. Finally, note that by definition of $N$, 2 is the only element $a \in D$ for which $N,t\nvDash \exists x(x<y)[a]$ and $N,t\vDash \exists x(y<x)[a]$.

These results show that $\neg\exists x(x<y)\land(\forall x (x=x)\rightarrow\exists x(y<x))$ is not equivalent to $\top$ or $\bot$, (that is, is not equivalent to $x<x$ or $x=x$). Now, given that the  models for {\bf RM} satisfy the seriality condition, by Lemma \ref{1},  $\neg\exists x(x<y)\land(\forall x (x=x)\rightarrow\exists x(y<x))$ is not equivalent to a quantifier free formula.\footnote{The formula $\forall y, z (\neg (x<y<z) \vee x<y<z)$ is another example of something that has no quantifierless equivalent in DLO. So, in fact, $\rightarrow$ is not essential to construct such a formula.}
\end{proof}

\subsection*{Failure of $\omega$-categoricity with respect to an equivalence relation}

\begin{pro}
There exist countable models $N,M\vDash$ DLO s.t. not $N\equiv M$.
\end{pro}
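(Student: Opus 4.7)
The plan is to exhibit two concrete countable $\mathbf{B}$-models of DLO which disagree on a closed sentence. For $N$, reuse the two-world model already built in the proof of Theorem 1: $W=\{s,t\}$, $D=\mathbb{Q}$, with $<$ interpreted as the standard order at $s$ but truncated to pairs in the interval $[2,3]$ at $t$. For $M$, take the one-world model $\langle\{s\},\{\langle s,s,s\rangle\},\mathbb{Q},{}^{*},s,V\rangle$ with $s^{*}=s$ and $V(s,<)$ the usual ordering of $\mathbb{Q}$. Both models are countable.

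First I would observe that $M\vDash$ DLO. With a single self-accessible world and $s^{*}=s$, the truth clauses for $\rightarrow$ and $\neg$ collapse to their classical counterparts, so verification of the DLO axioms in $M$ reduces to the classical fact that $\langle\mathbb{Q},<\rangle$ is a dense linear order without endpoints; the $\mathbf{B}$-frame conditions are also immediate. That $N\vDash$ DLO was already asserted (and machine-verified) in the proof of the preceding theorem.

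The key step is to identify a distinguishing sentence. Take $\phi := \exists y\,\neg\exists x(x<y)$, which says that a minimum exists. I claim $N,s\vDash\phi$, witnessed by $y=2$. Indeed, $N,s\vDash\neg\exists x(x<y)[2]$ iff $N,s^{*}=t\nvDash\exists x(x<y)[2]$; at $t$ the relation $<$ holds only among rationals in $[2,3]$ under their usual order, so no element of $\mathbb{Q}$ is strictly below $2$ at $t$, giving the required failure. In $M$, however, $\phi$ fails because $M$ evaluates classically and $\mathbb{Q}$ has no least element. Hence $N$ and $M$ disagree on $\phi$, so $N\not\equiv M$ for any equivalence $\equiv$ preserving truth of sentences. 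The only (very minor) subtlety is remembering that $\neg$ at $s$ is controlled by $s^{*}=t$, which is precisely where the truncation in $N$ manufactures a spurious minimum invisible to the classical model $M$; no further machinery is required.
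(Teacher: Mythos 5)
Your proposal is correct, but it differs in its concrete witnesses from the paper's own proof. The paper uses the same one-world classical model $M$ that you do, but pairs it with a more minimal two-world model: $R=\{\langle s,s,s\rangle,\langle s,t,t\rangle\}$ with $<$ interpreted as the \emph{empty} relation at $t$, and it distinguishes the models by the sentences $\forall x(x=x)\rightarrow\text{A3}$ and the outright contradiction $\text{A3}\land\neg\text{A3}$, thereby also displaying the paraconsistent flavour of the non-classical models. You instead recycle the richer \textbf{RM} countermodel from the QE-failure theorem (order truncated to $[2,3]$ at $t$) and separate the two models with the single $\rightarrow$-free sentence $\exists y\,\neg\exists x(x<y)$, whose truth at $s$ in $N$ depends only on the $*$-clause for negation and the fact that nothing lies below $2$ at $t$. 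What your route buys is robustness and economy: the distinguishing sentence involves no conditional, so nothing hinges on how $\rightarrow$ is evaluated at the auxiliary world (a point where the paper's own verification is delicate, since its $N$ has no $R$-triples beginning with $t$ and hence all conditionals hold vacuously there), and the model-checking work is inherited from the earlier theorem; the mild cost is that dependence, and note that the PROVER9 verification cited there concerns the \textbf{RM} frame conditions, while the fact that DLO holds at $s$ is asserted rather than machine-checked. Either way, disagreement on a single sentence suffices for $N\not\equiv M$, since $\equiv$ is assumed to preserve truth of sentences.
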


\begin{proof}
Let $M=\langle W,D,R,^*,s,V\rangle$ where:
\begin{itemize}
\item $W=\{s\}$, $R=W^3$, $^*=W^2$
\item $D=\mathbb{Q}$
\item $s$ is the designated world
\item $V$ assigns the usual $<$-ordering on $\mathbb{Q}$.
\item $V$ assigns = real identity.
\end{itemize}

\bigskip
Given the definition of $V$, $M\vDash$ DLO, and since $D=\mathbb{Q}$, $M$ is countable. $M,s\vDash\forall x (x = x)\rightarrow$ A3 and $M,s\nvDash$ A3 $\land\neg$ A3.

\bigskip
Let $N=\langle W,D,R,^*,s,V\rangle$ where:
\begin{itemize}
\item $W=\{s,t\}$
\item $D=\mathbb{Q}$
\item $R=\{\langle s,s,s\rangle,\langle s,t,t\rangle\}$
\item $^*=\{\langle s,t\rangle,\langle t,s\rangle\}$
\item $s$ is the designated world
\item $V$ assigns the usual $<$-ordering on $\mathbb{Q}$ at $s$, and assigns $<$ to $\varnothing$ at $t$.
\item $V$ assigns = real identity at every world.
\end{itemize}

Given the definition of $V$ for $<$ at $s$, $N\vDash$ DLO. Again, $N$ is countable. We need only note that $N,s\nvDash\forall x (x = x)\rightarrow$ A3 and $N,s\vDash$ A3 $\land\neg$ A3. Hence, we have formulae $A,B$ s.t. $M\vDash A$ and $N\nvDash A$, and $M\nvDash B$ and $N\vDash B$, and so not $M\equiv N$. 

\end{proof}

\subsection*{Failure of Negation Completeness}

\begin{pro}
There exists a formula $A$ such that DLO in RM proves neither $A$ nor $\neg A$.
\end{pro}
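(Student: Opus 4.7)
My plan is to identify a sentence $A$ together with two RM-models of DLO such that the first refutes $A$ at the designated world $s$ and the second refutes $\neg A$ at $s$, thereby witnessing that DLO in RM proves neither. Following the idea of the preceding proposition, I propose to take $A$ to be $\forall x(x=x) \rightarrow $ A3. The design is that the antecedent is trivially true at every world, so via the Routley-Meyer clause for $\rightarrow$, the truth of $A$ at $s$ becomes equivalent to A3 holding at every $w_2$ for which there is some $w_1$ with $Rsw_1w_2$. Although A3 is a DLO axiom and so holds at $s$ itself, it may fail at other $R$-accessible worlds when the interpretation of $<$ is sufficiently restricted, giving room for $A$ to fail.

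For the claim DLO $\nvDash \neg A$, I will reuse the single-world classical DLO model $M$ from the preceding proposition (where $W=\{s\}$, $s^*=s$, $R=W^3$, and $<$ is interpreted as the usual ordering on $\mathbb{Q}$). Every connective collapses to its classical reading here; A3 holds at $s$, so $A$ holds at $s$. Since $s^* = s$, the truth of $\neg A$ at $s$ would require the failure of $A$ at $s$, which does not occur. Thus $M$ witnesses the non-derivability of $\neg A$.

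For the claim DLO $\nvDash A$, I will reuse the two-world RM-model $N$ from the proof of the failure of quantifier elimination. The key observation is that A3 fails at the star-world $t = s^*$ of $N$: since $\langle t,s,t\rangle \in R$ and the ordering at $t$ is the restriction of $<$ to $[2,3]$, witnesses $x,y,z \in \mathbb{Q}\setminus[2,3]$ such as $0,1,2$ yield $x<y \land y<z$ at $s$ but $x<z$ failing at $t$. Given that $\langle s,t,t\rangle \in R$ and that $\forall x(x=x)$ holds at every world, the $\rightarrow$-clause then forces $A$ to fail at $s$ in $N$. The main obstacle is simply the routine Routley-Meyer bookkeeping for the nested $\rightarrow$ and confirming that $N$ meets all the RM frame conditions; the latter has already been addressed in the paper via mechanical check.
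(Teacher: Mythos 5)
Your proposal is correct and takes essentially the same approach as the paper: both refute $\neg A$ at $s$ in the one-world classical RM-model and refute $A$ at $s$ in the two-world model $N$ from the QE-failure theorem (which, unlike the model in the $\omega$-categoricity proposition, does satisfy the RM frame conditions). The only difference is the witness formula, $\forall x(x=x)\rightarrow$ A3 instead of the paper's $\forall x\,\neg(x<x)\rightarrow\forall x\exists y\,(y<x)$, and your verification that A3 fails at $t$ via the triple $\langle t,s,t\rangle$ and then that $A$ fails at $s$ via $\langle s,t,t\rangle$ is sound.
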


\begin{proof}
Consider the RM model $M$ where:

\begin{itemize}
\item $W=\{s\}$, $R=W^3$, $^*=W^2$
\item $D=\mathbb{Q}$
\item $s$ is the designated world
\item $V$ assigns the usual $<$-ordering on $\mathbb{Q}$
\end{itemize}

On this model, the `classical' model for RM, we have that $M,s\nvDash\neg(\forall x\neg x<x\rightarrow\forall x\exists yy<x)$. In the model $N$ defined in Theorem 3, we have that $N,t\nvDash\forall x\neg x<x\rightarrow\forall x\exists yy<x$ because $N,t\vDash\forall x\neg x<x$ and $N,t\nvDash\forall x\exists yy<x$. Hence, it is the case that DLO$\nvDash \forall x\neg x<x\rightarrow\forall x\exists yy<x$ and DLO$\nvDash\neg(\forall x\neg x<x\rightarrow\forall x\exists yy<x)$.

\end{proof}

So, for DLO, we have the failure of negation completeness, quantifier elimination, $\omega$-categoricity, and it's fairly obvious that we also have the failure of model-completeness. Essentially, in any relevant logic, DLO fails to have any of the nice model-theoretic properties it enjoys in the classical context. At base, one can chalk this up to the fact that in the relevant setting, we have much more power to construct countermodels.

\begin{thm}\label{t} Let \textbf{L} be any relevant logic between \textbf{B} and \textbf{RM}. Then QE fails for RCF in \textbf{L}.
\end{thm}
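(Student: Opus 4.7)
The plan is to mirror the proof of Theorem 3, adapting the two-world construction to the richer signature of RCF. I would build a model $N'$ with worlds $W=\{s,t\}$, the same ternary relation $R$ and star map as the DLO model $N$ from that proof, and domain $D=\mathbb{R}_{\mathrm{alg}}$ (the countable real closed field of real algebraic numbers), with the standard, world-independent interpretations of $+,\times,-,{}^{-1},0,1$. I would set $V(s,<)$ to be the standard strict order and choose $V(t,<)$ so that all RCF axioms hold at $s$.

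The main obstacle is the choice of $V(t,<)$: the DLO-style restriction of $<$ to the interval $[2,3]$ breaks RCF's translation-invariance axiom A5, which, via the triple $Rstt$, demands that $V(t,<)$ itself be closed under translation. Combined with A1, A3 and A4, this forces $V(t,<)=\{(x,y)\in D^2:y-x\in S\}$ for some $S\subseteq D_{>0}$ closed under $+$ and $\cdot$ and not containing $0$. Taking $S=\mathbb{Q}_{>0}$ works; verifying the remaining RCF axioms at $s$ is routine but tedious.

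For the witness I would take $A(y)=\forall z\,(0<z\rightarrow 0<z+y)$. In a one-world classical model $M_{\mathrm{cl}}$ of RCF this defines $\{y\geq 0\}$. In $N'$, however, the instance $A(0)$ is $\forall z\,(0<z\rightarrow 0<z)$, and this fails at the world $t$: at the triple $Rtst\in R$, $0<z$ holds at $s$ whenever $z$ is a positive irrational algebraic number while failing at $t$ (since $S=\mathbb{Q}_{>0}$), so the inner implication fails at $Rtst$ and hence $A(0)$ is false at $t$ in $N'$.

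The key step, in place of Lemma~\ref{1}, is the observation --- proved by structural induction on $B$ --- that for every quantifier-free RCF formula $B(y)$, the truth value of $B(0)$ at either world of $N'$ coincides with its classical truth value. The base case uses that every RCF term evaluates to a rational at $y=0$, so the atoms $p(0)<q(0)$ and $p(0)=q(0)$ evaluate the same way classically, at $s$ of $N'$, and at $t$ of $N'$; the inductive step for $\land,\lor,\neg$ is immediate, and for $\rightarrow$ it follows because, at $y=0$, the evaluation at every triple of $R$ reduces to a single classical instance of the implication. Consequently any quantifier-free $B$ that is RCF-equivalent to $A$ is classically equivalent to $y\geq 0$, so $B(0)$ is classically true and hence true at $t$ of $N'$, contradicting $A(0)$ being false there. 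The hardest piece is the two routine-but-tedious verifications: confirming the RCF axioms at $s$ in $N'$, and carrying the structural induction through the $\rightarrow$ case.
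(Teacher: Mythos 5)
Your proposal is correct, and it rests on the same core device as the paper's proof: reinterpret $<$ at the non-designated world $t$ as ``$x<y$ iff $y-x$ lies in a distinguished positive set closed under $+$ and $\times$,'' which keeps every RCF axiom true at $s$ (the implicational axioms hold worldwise, trichotomy is only evaluated at $s$, and heredity holds because the $t$-order is contained in the $s$-order), while a quantified formula detects at $t$ the existence of elements whose differences escape that set. The execution, though, is genuinely different and lighter. The paper invokes upward L\"owenheim--Skolem to obtain a large elementary extension $\mathbb{R}'$ of $\mathbb{R}$, takes ``$y-x\in\mathbb{R}$'' as the $t$-order, and compares two Routley--Meyer models, $N'$ and its restriction $N''$ to $\mathbb{R}$, at the designated world, via an induction showing quantifier-free formulas with parameters from $\mathbb{R}$ agree across the two models. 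You instead use a single countable concrete model (real algebraic numbers, with $\mathbb{Q}_{>0}$-differences at $t$) together with the one-world classical model, a different witness formula, and the invariance lemma that quantifier-free formulas at the assignment $y=0$ take their classical truth values at both worlds of $N'$; the contradiction is extracted at world $t$. Your route avoids L\"owenheim--Skolem and all cardinality bookkeeping; the paper's route gives invariance for arbitrary $\mathbb{R}$-parameters at the designated world, which is what powers its later remark that even interdeducibility with a quantifier-free formula fails (your $A(0)$ is true at $s$ of $N'$, so your argument does not yield that stronger remark, but the theorem as stated does not require it). Three small points to make explicit: fix $0^{-1}$ to a rational value (say $0$) so that ``every term evaluates to a rational at $y=0$'' is literally true, since otherwise the atomic case of your induction can fail; spell out how falsity of $A(0)$ at $t$ refutes $\mathrm{RCF}\vDash A\leftrightarrow B$, e.g.\ via the paper's Proposition 2, or directly by noting that the triple $(s,t,t)$ together with $B(0)$ true at $t$ and $A(0)$ false at $t$ already falsifies $B\rightarrow A$ at the designated world $s$, which is where deducibility is officially evaluated (and if instead $B(0)$ is classically false, the classical model falsifies $A\rightarrow B$); and note that both of your models carry frames (the paper's seven-triple frame, and the one-world frame) satisfying the \textbf{RM} conditions, hence are \textbf{L}-models for every \textbf{L} between \textbf{B} and \textbf{RM}, as the quantification over logics requires.
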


\begin{proof}

Consider the first order  structure  $\langle \mathbb{R}, <_{\mathbb{R}}, +_{\mathbb{R}}, -_{\mathbb{R}}, ^{-1_{\mathbb{R}}}, \times_{\mathbb{R}}, 0_{\mathbb{R}}, 1_{\mathbb{R}} \rangle$. By the upward L\"owenheim-Skolem theorem, there is an elementary (in the sense of classical first order logic) extension   $\langle \mathbb{R}^{\prime}, <_{\mathbb{R}^{\prime}}, +_{\mathbb{R}^{\prime}}, -_{\mathbb{R}^{\prime}},  ^{-1_\mathbb{R}^{\prime}}, \times_{\mathbb{R}^{\prime}}, 0_{\mathbb{R}^{\prime}}, 1_{\mathbb{R}^{\prime}} \rangle$ of  $\langle \mathbb{R}, <_{\mathbb{R}}, +_{\mathbb{R}}, -_{\mathbb{R}}, ^{-1_{\mathbb{R}}}, \times_{\mathbb{R}}, 0_{\mathbb{R}}, 1_{\mathbb{R}} \rangle$ such that $|\mathbb{R}^{\prime}| = 2^{2^{\omega}}$. So in particular,  $\langle \mathbb{R}^{\prime}, <_{\mathbb{R}^{\prime}}, +_{\mathbb{R}^{\prime}}, -_{\mathbb{R}^{\prime}},  ^{-1_\mathbb{R}^{\prime}}, \times_{\mathbb{R}^{\prime}}, 0_{\mathbb{R}^{\prime}}, 1_{\mathbb{R}^{\prime}} \rangle$  is a model of the classical theory of real closed fields. Furthermore, it can be guaranteed that $|\mathbb{R}| < |\{x \in \mathbb{R}^{\prime} : 0_{\mathbb{R}^{\prime}} <_{\mathbb{R}^{\prime}} x\}|$.

Define the ordering $<^{\prime}$ on $\mathbb{R}^{\prime}$ as:

\begin{center}
$x<'y$ iff $0 <_{\mathbb{R}^{\prime}}y-x$ and $y-x \in \mathbb{R}$, 
\end{center}
where the expression $y-x$ abbreviates $y+(-x)$ as usual. First, we see that for $x \in \mathbb{R}^{\prime}$, it cannot be the case that  $x<'x$ since $0 <_{\mathbb{R}^{\prime}}x-x = 0$ is false. So $<'$ is irreflexive. If $x, y, z \in  \mathbb{R}^{\prime}$, $x<'y$ and $y<'z$, i. e., $0 <_{\mathbb{R}^{\prime}}y-x \in  \mathbb{R}$ and $0 <_{\mathbb{R}^{\prime}}z-y \in \mathbb{R}$, then $0 <_{\mathbb{R}^{\prime}}z-x=(y-x)+(z-y) \in \mathbb{R}$ (for $\mathbb{R}$ is certainly closed under $+$), which means that $x <' z$. So $<'$ is transitive. Now if $0<'x$ and $0<'y$, i.e., $0 <_{\mathbb{R}^{\prime}}x-0=x \in \mathbb{R}$ and $0 <_{\mathbb{R}^{\prime}}y-0=y \in \mathbb{R}$ then surely $0 <_{\mathbb{R}^{\prime}}xy-0 = xy \in \mathbb{R}$ (for $\mathbb{R}$ is also closed under $\times$), i.e., $0<'xy$. Finally, if $x<'y$,  i. e., $0 <_{\mathbb{R}^{\prime}}y-x \in  \mathbb{R}$ then since $z-z=0$, it follows that  $0 <_{\mathbb{R}^{\prime}} (y+z) - (x+z) = (y-x) + (z-z)  \in  \mathbb{R}$, which means that $x+z<'y+z$.

\bigskip
Now consider a Routley-Meyer model $N^{\prime}=\langle W,D,R,^*,s,V\rangle$ where

\begin{itemize}
\item $W=\{s, t\}$
\item $D=\mathbb{R}^{\prime}$
\item All constants and function symbols of the language of RCF are interpreted on $\mathbb{R}^{\prime}$ as in  $\langle \mathbb{R}^{\prime}, <_{\mathbb{R}^{\prime}}, +_{\mathbb{R}^{\prime}}, -_{\mathbb{R}^{\prime}},  ^{-1_\mathbb{R}^{\prime}}, \times_{\mathbb{R}^{\prime}}, 0_{\mathbb{R}^{\prime}}, 1_{\mathbb{R}^{\prime}} \rangle$.
\item $R=\{\langle s,s,s\rangle,\langle s,t,t\rangle,\langle t,s,s\rangle,\langle t,s,t\rangle,\langle t,t,s\rangle,\langle t,t,t\rangle, \langle s,t,s\rangle\}$
\item $V$ assigns the symbol $<$ the  (strict) ordering $<_{  \mathbb{R}^{\prime}}$ on $\mathbb{R}^{\prime}$ at world $s$. At world $t$, $V$ assigns the ordering $<'$ as the interpretation of the symbol $<$. Note that since $<' \subset <_{\mathbb{R}^{\prime}}$, $V$ is an admissible valuation in the Routley-Meyer semantics.  At every world, $=$ is interpreted as real identity.
\item $s$ is the designated world.
\item $^*=\{\langle s,t\rangle,\langle t,s\rangle\}$.
\end{itemize}

This structure is a model of RCF. Next take any $r \in \{x \in \mathbb{R}^{\prime} : 0_{\mathbb{R}^{\prime}} <_{\mathbb{R}^{\prime}} x\}\setminus \mathbb{R}$ (recall that we made sure that such an $r$ exists). Now, $(r+_{\mathbb{R}^{\prime}} 1_{\mathbb{R}^{\prime}}) - 1_{\mathbb{R}^{\prime}} = r$. But then it cannot be that $1_{\mathbb{R}^{\prime}} <^{\prime}(r+_{\mathbb{R}^{\prime}} 1_{\mathbb{R}^{\prime}})$. Hence,
\begin{center}
 $N^{\prime}, s \nvDash \forall y, z ((0 < y < z \land \neg(y<z)) \rightarrow \neg \forall x (x=x) )$.
\end{center}
Observe that in this model  $\neg \forall x (x=x) )$ is essentially just $\bot$. 

Now take the model $N^{\prime \prime}$ of RCF which is just $N^{\prime}$ but with $D = \mathbb{R}$. With this modification, the ordering $<^{\prime}$ basically collapses to $<_{\mathbb{R}}$. It is easy to see then that 
\begin{center}
 $N^{\prime \prime}, s \vDash \forall y, z ((0 < y < z \land \neg(y<z)) \rightarrow \neg \forall x (x=x) )$.
\end{center}
Finally, one can show by induction on formula complexity that if $A(x)$ is a quantifier free relevant formula in the full language of RCF and $\overline{a}$ a sequence of objects from the domain $\mathbb{R}$ of  $N^{\prime \prime}$, $w \in W$, then
\begin{center}
$N^{\prime \prime}, w \vDash A[\overline{a}]$ iff $N^{\prime }, w \vDash A[\overline{a}]$.
\end{center}
 But with the above results this implies that the formula  in the free variable $x$, 
\begin{center}
$\forall y, z ((x < y < z \land \neg(y<z)) \rightarrow \neg \forall x (x=x) )$
\end{center}
 cannot be equivalent to any quantifierless formula of the language of RCF\footnote{In fact, the presence of $\rightarrow$ is not essential here, the formula $\exists y, z ((x < y)\land(y < z) \land \neg(y<z))$ would also do the trick.}, so the theory lacks QE.

\end{proof}

\begin{thm} Let \textbf{L} be any relevant logic between \textbf{B} and \textbf{RM}. Then QE fails for Presburger arithmetic in \textbf{L}.
\end{thm}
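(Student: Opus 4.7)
The plan is to mirror the proof of Theorem \ref{t} almost verbatim, replacing the real closed field apparatus with a nonstandard model of classical Presburger arithmetic. I would equip the auxiliary world with a restricted ordering that collapses back to the usual one on the standard naturals, then locate a quantified formula whose truth value at the base world distinguishes this two-world model from a companion model with standard domain, and finally invoke a quantifier-free preservation lemma to forbid any QF equivalent.

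Concretely, I would first use the upward L\"owenheim--Skolem theorem (or compactness, adjoining a constant $c$ with axioms $c > \underbrace{1+\dots+1}_{n}$ for each $n$) to obtain a nonstandard elementary extension $\langle \mathbb{N}^*, <_*, +_*, 0, 1, P_n^*\rangle$ of the classical Presburger model on $\mathbb{N}$, and fix some $c \in \mathbb{N}^*\setminus\mathbb{N}$. Then I would define a restricted ordering $<'$ on $\mathbb{N}^*$ by $x<'y$ iff $y = x +_* k$ for some $k \in \mathbb{N}\setminus\{0\}$, observing that $<'\subseteq <_*$, that $<'$ is irreflexive and transitive, and that it is preserved under $+_*$-translation. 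The model $N'=\langle W, D, R, ^*, s, V\rangle$ would take $W = \{s, t\}$, $D = \mathbb{N}^*$, and reuse the relation $R$ and involution $^*$ from the proof of Theorem \ref{t} (so the \textbf{RM} frame conditions come for free), with $+, 0, 1$ interpreted as in $\mathbb{N}^*$, with $V(s,<) = <_*$ and $V(t,<) = <'$, and with $V(s,P_n) = V(t,P_n)$ both set equal to standard $n$-divisibility on $\mathbb{N}^*$. The hereditary condition is then immediate from $<'\subseteq <_*$ together with the constancy of each $P_n$ across worlds, and axioms A1--A11 at $s$ are inherited from the classical Presburger axioms on $\mathbb{N}^*$. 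The companion model $N''$ is defined identically except with $D = \mathbb{N}$; on $\mathbb{N}$ the orderings $<'$ and $<_*$ collapse to the same relation, so $N''$ behaves essentially classically.

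The distinguishing formula will be
\[
\phi(x) := \forall y, z \bigl((x<y \land y<z \land \neg(y<z)) \rightarrow \neg\forall u(u=u)\bigr).
\]
Taking $y = 1$, $z = c$ shows $N', s \nvDash \phi[0]$: the first two conjuncts of the antecedent hold at $s$, the third holds because $c - 1 \notin \mathbb{N}$ forces $1 <' c$ to fail at $t$, yet the consequent $\neg\forall u(u=u)$ is true at no world. Conversely $N'', s \vDash \phi[0]$: for $a, b \in \mathbb{N}$ the conjuncts $a<b$ and $\neg(a<b)$ cannot hold simultaneously at $s$ or at $t$, since $<'$ and $<_*$ agree on $\mathbb{N}$, so the antecedent fails at every accessible $w_1$ and the implication is vacuously true. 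A routine induction on formula complexity then gives that any quantifier-free formula with parameters in $\mathbb{N}$ receives the same truth value at each world in $N'$ as in $N''$, ruling out any QF equivalent of $\phi$ provable in the theory. The main obstacle I anticipate is the careful verification that axioms A10 and A11 really do hold at $s$ in the two-world setting, because they involve nested $\rightarrow\bot$ over the $P_n$ whose relevant-logic semantics requires checking behaviour at all $w_1, w_2$ with $Rsw_1w_2$; the resolving trick is precisely the choice $V(s,P_n) = V(t,P_n)$, which makes $P_n(\cdot)\rightarrow\bot$ at $s$ reduce to the classical negation, but this reconciliation with the accessibility pattern inherited from Theorem \ref{t} is the one place where the argument does real work beyond the RCF analogue.
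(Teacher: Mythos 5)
Your proposal is correct and follows essentially the same route as the paper, whose proof is just the remark that the RCF argument (Theorem \ref{t}) carries over with a smaller signature and an integer-like domain in place of $\mathbb{R}$; you have simply filled in the details the paper leaves to the reader, including the key point that the divisibility predicates $P_n$ should be interpreted identically at both worlds so that axioms A10--A11 survive the two-world Routley--Meyer setting. The only (inessential) deviation is that you work with $\mathbb{N}$ and a nonstandard elementary extension $\mathbb{N}^*$ where the paper suggests $\mathbb{Z}$; both satisfy the stated axioms, so the argument goes through unchanged.
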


\begin{proof}
This follows as for RCF except that we work with  $\mathbb{Z}$ rather than $\mathbb{R}$, and a smaller signature. The reader can fill in the details.
\end{proof}

\begin{thm} Let \textbf{L} be any relevant logic between \textbf{B} and \textbf{RM}. Then QE fails for DOAG in \textbf{L}.
\end{thm}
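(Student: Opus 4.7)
The plan is to imitate the proof of Theorem \ref{t} for RCF, this time in the additive signature of DOAG. First I would fix the standard divisible ordered abelian group $\langle \mathbb{Q}, <_{\mathbb{Q}}, +_{\mathbb{Q}}, -_{\mathbb{Q}}, 0_{\mathbb{Q}}\rangle$ and, by upward L\"owenheim--Skolem, take a proper elementary extension $\langle \mathbb{Q}', <_{\mathbb{Q}'}, +_{\mathbb{Q}'}, -_{\mathbb{Q}'}, 0_{\mathbb{Q}'}\rangle$ of strictly larger cardinality, so that $|\mathbb{Q}| < |\{x \in \mathbb{Q}' : 0 <_{\mathbb{Q}'} x\}|$ and, in particular, there exist positive elements of $\mathbb{Q}'$ lying outside $\mathbb{Q}$. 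I would then define on $\mathbb{Q}'$ the restricted ordering $x <' y$ iff $0 <_{\mathbb{Q}'} y - x$ and $y - x \in \mathbb{Q}$ (abbreviating $y+(-x)$ as usual). The verification that $<'$ is irreflexive, transitive, and preserved under additive translation runs verbatim as in the proof of Theorem \ref{t}, using only closure of $\mathbb{Q}$ under $+$; the multiplicative clause from that proof is irrelevant here because $\times$ is absent from the DOAG signature.

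Next I would assemble the Routley--Meyer model $N' = \langle W, D, R, ^*, s, V\rangle$ with $W = \{s, t\}$, $D = \mathbb{Q}'$, and the same $R$ and $^*$ as in Theorem \ref{t}. The symbols $+, -, 0$ are interpreted uniformly on $\mathbb{Q}'$ at both worlds, while $<$ is interpreted as $<_{\mathbb{Q}'}$ at $s$ and as $<'$ at $t$. Since $<' \subseteq <_{\mathbb{Q}'}$, the hereditary condition is respected. Of the DOAG axioms, A1--A4 hold at both worlds by the established properties of the two orderings, while A5--A9 involve only $=, +, -, 0$ and are therefore satisfied uniformly at every world. For a positive $r \in \mathbb{Q}' \setminus \mathbb{Q}$, the witness $y=1$, $z=r+1$ shows $N', s \nvDash \forall y,z((0 < y < z \wedge \neg(y < z)) \rightarrow \neg \forall x(x = x))$, since $z - y = r \notin \mathbb{Q}$ forces $y <' z$ to fail at $t$ (so the antecedent holds at $s$), while the consequent is false at every world. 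A companion model $N''$ obtained by restricting the domain to $\mathbb{Q}$ makes $<'$ collapse to $<_{\mathbb{Q}}$, and the same formula then holds vacuously at $s$ in $N''$.

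The final step is a routine induction on quantifier-free formulas of the DOAG signature, showing that $N'$ and $N''$ agree on all tuples drawn from $\mathbb{Q}$. This suffices to block the formula $\forall y,z((x < y \wedge y < z \wedge \neg(y < z)) \rightarrow \neg \forall x(x = x))$, with $x$ free, from being equivalent over DOAG to any quantifier-free formula (and, as in the footnote to Theorem \ref{t}, the formula $\exists y,z((x < y) \wedge (y < z) \wedge \neg(y < z))$ would serve equally well, so $\rightarrow$ is not essential). The main obstacle I anticipate is purely bookkeeping: checking that the transition to the smaller signature preserves all the properties of $<'$ needed for the axioms of DOAG. Since closure of $\mathbb{Q}$ under $+$ and under additive inverses is all that was used for the additive clauses in Theorem \ref{t}, this transition is smooth, and the core non-classical frame structure imported from Theorem \ref{t} carries the argument.
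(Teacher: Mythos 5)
Your proposal is correct and is exactly the route the paper intends: its own ``proof'' merely states that the argument for RCF goes through with $\mathbb{Q}$ in place of $\mathbb{R}$ and leaves the details to the reader, and your write-up supplies precisely those details (the elementary extension $\mathbb{Q}'$, the restricted ordering $<'$, the two-world model, and the agreement of the two models on quantifier-free formulas over $\mathbb{Q}$-tuples). The observation that only closure of $\mathbb{Q}$ under $+$ and $-$ is needed, the multiplicative clause being vacuous in the DOAG signature, is the right bookkeeping check.
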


\begin{proof}
The proof is left to the reader and it follows as for the case of RCF again, by using $\mathbb{Q}$ rather than $\mathbb{R}$.
\end{proof}

To end this section we wish to remark that the failure of QE presented here cannot be fixed by taking a weaker notion of QE where we merely demand interdeducibility with a quantifier free formula as opposed to equivalence in the sense of a relevant biconditional.  The reason is that our arguments show that the formulas witnessing the failure of QE cannot be interdeducible with any quantifierless formula either. To see this note that, for instance, in the proof of Theorem \ref{t}, we actually showed that there cannot be any quantifierless formula $A(x)$ such that:
 
\begin{center}
$N^{\prime \prime}, s \vDash \forall y, z ((0 < y < z \land \neg(y<z)) \rightarrow \neg \forall x (x=x) )$ iff $N^{\prime }, s \vDash A[0]$,
\end{center}
which means that no such quantifierless formula  can be interdeducible with $\forall y, z ((x < y < z \land \neg(y<z)) \rightarrow \neg \forall x (x=x) )$.

\section{Theories with Quantifier Elimination}\label{sec:qe}

 In this section we study under which circumstances can we get QE back. Some proofs  are simply arguments from \cite{Marker02}, given in the Routley-Meyer semantic framework.

\begin{pro}\label{2}
Let \textbf{L} be any logic  extending \textbf{B} + \textsf{K} such that its frames satisfy the condition $\forall x \exists y, z (Rxyz)$, $M=\langle W,R,D,^*, s, V\rangle$  an \textbf{L}-model for DLO, $A(\overline{x})$ a formula of DLO, and $\overline{a}$ a sequence of elements of D. Then for any $w \in W$, $M, w \vDash A [\overline{a}]$ iff  $M, s \vDash A [\overline{a}]$. 
\end{pro}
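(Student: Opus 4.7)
My plan is to prove, by induction on the structure of $A$, the strengthened statement that for every formula $B$, every sequence $\overline{b}$ from $D$, and every $w \in W$, $M, w \vDash B[\overline{b}]$ iff $M, s \vDash B[\overline{b}]$. The direction $M, s \vDash B \Rightarrow M, w \vDash B$ is just Proposition 1 (the hereditary condition) applied to $Rsww$, which holds in every $\mathbf{B}$-frame. The substantive work is the converse, which is where the DLO axioms come in.

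For the atomic case, $x = y$ is immediate since $=$ is interpreted as metatheoretic identity at every world. The genuine content is at $x < y$. Suppose, for an interpretation $a, b \in D$, that $M, w \vDash a < b$ but $M, s \nvDash a < b$. Trichotomy (A2 at $s$) then forces either $a = b$ (so $M, w \vDash a < a$ at once) or $M, s \vDash b < a$, whence heredity from $s$ to $w$ via $Rsww$ yields $M, w \vDash a < b \wedge b < a$. In the second subcase, transitivity at $w$ delivers $M, w \vDash a < a$; this transitivity is obtained by instantiating the universal implication A3 at $s$ with the triple $Rsww$, and a similar instantiation of A1 yields irreflexivity at $w$, contradicting $M, w \vDash a < a$ in either subcase.

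For the inductive step, $\wedge$, $\vee$, $\forall$, and $\exists$ are immediate from the IH applied to the subformulas. For negation, $M, w \vDash \neg B$ iff $M, w^* \nvDash B$, which by IH applied at $w^* \in W$ is iff $M, s \nvDash B$; the symmetric calculation at $s$ using $s^* \in W$ shows $M, s \vDash \neg B$ reduces to the same condition. For implication, the IH lets me rewrite $M, w \vDash B \to C$ as the clause ``for every pair $\alpha, \beta$ with $Rw\alpha\beta$, $M, s \vDash B$ implies $M, s \vDash C$''; seriality ensures this universal is non-vacuous, so the clause collapses to the world-free statement $M, s \vDash B \Rightarrow M, s \vDash C$, and the same collapse at $s$ uses $Rsss$, so the two evaluations coincide.

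The main obstacle is the atomic step, where one must convert the syntactic axioms A1--A3, stated at the designated world, into genuinely local facts about irreflexivity, transitivity, and trichotomy at an arbitrary $w$; once these are in hand, the contradiction above completes the base case. The axiom $\textsf{K}$ plays a supporting role by strengthening truth-propagation in the Routley--Meyer semantics via its frame condition $R\alpha\beta\gamma \Rightarrow Rs\alpha\gamma$, smoothing the $\to$ step by guaranteeing that any truth inherited through a ternary triple can be traced back to $s$-heredity.
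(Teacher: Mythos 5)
Your overall strategy is the paper's own: induction on complexity, with the atomic case handled by trichotomy at $s$ together with A1 and A3 instantiated through the triple $Rsww$, negation handled through the star worlds and the inductive hypothesis, and the implication case collapsed to a world-free condition using the inductive hypothesis plus seriality (with $Rsss$ securing non-vacuity at $s$). Those parts are fine.

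There is, however, a concrete misstep in how you justify the ``easy'' direction $M,s\vDash B[\overline{b}] \Rightarrow M,w\vDash B[\overline{b}]$, and it recurs inside your atomic case. The hereditary condition propagates truth along pairs $\langle\alpha,\beta\rangle$ with $Rs\alpha\beta$; applied to $Rsww$ it only says that what holds at $w$ holds at $w$, which is vacuous. To move truth from $s$ to $w$ you need $Rssw$, and $Rssw$ does \emph{not} hold in every \textbf{B}-frame: it is exactly what the frame condition for \textsf{K}, $R\alpha\beta\gamma \Rightarrow Rs\alpha\gamma$, delivers when applied to $Rsww$. Indeed, without \textsf{K} the claim you assert for ``every \textbf{B}-frame'' is false even for atomic formulas: the two-world serial model used in the paper's failure-of-QE theorem for DLO (usual order on $\mathbb{Q}$ at $s$, order restricted to $[2,3]$ at $t$) satisfies DLO at $s$ but $0<1$ fails at $t$. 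The same gap appears when you transfer $M,s\vDash b<a$ to $w$ ``via $Rsww$'' in the trichotomy subcase. Since \textsf{K} is among your hypotheses the repair is one line ($Rsww$ holds in any \textbf{B}-frame, \textsf{K}'s condition turns it into $Rssw$, then apply heredity), but your closing diagnosis misattributes the role of \textsf{K}: it is not needed to smooth the $\rightarrow$ step (whose converse direction uses only seriality and the inductive hypothesis, as you in effect show), but is precisely what underwrites the $s$-to-$w$ heredity direction on which your base case and the whole left-to-right half of the induction depend.
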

\begin{proof}
The frame condition for \textsf{K} (the principle $A\rightarrow(B\rightarrow A))$ is:
\begin{center}
(1) \,\,\,\,\,\, $R\alpha \beta \gamma$ only if $Rs\alpha \gamma$.
\end{center}
Now we proceed by induction of the complexity of $A(\overline{x})$. Suppose first that $A(\overline{x})$ is an atomic formula of the form $x_i < y_j$. Then if $M, s \vDash A [\overline{a}]$, $M, w \vDash A [\overline{a}]$  follows by the Hereditary condition and the fact that $Rssw$ in the presence of (1) since $Rsww$. For the converse suppose that $M, w \vDash A [\overline{a}]$, which means that $M, w \vDash x_i < y_j [a_ia_j]$. Now if $M, s \nvDash x_i < y_j [a_ia_j]$, since $M$ is a model for $DLO$, either $M, s \vDash x_i = y_j [a_ia_j]$ or $M, s \vDash y_j <   x_i  [a_ia_j]$. If the first, it must be that $M, w \vDash x_i = y_j [a_ia_j]$ and since $(x_i = y_j \land x_i < y_j)  \rightarrow x_i < x_i $ is a theorem of \textbf{L}, it follows that $M, s \vDash (x_i = y_j \land x_i < y_j)  \rightarrow x_i < x_i   [a_ia_j]$, so 
 since $Rsww$, it must be that  $M, w \vDash  x_i < x_i   [a_ia_j]$, which is impossible. If, on the other hand, $M, s \vDash y_j <   x_i  [a_ia_j]$, then $M, w \vDash y_j <   x_i  [a_ia_j]$ and given that $Rsww$ it must be the case that  $M, w \vDash x_i <   x_i  [a_ia_j]$, i.e., $M, w \vDash x_i <   x_i  [a_i]$, which is impossible again. Hence, $M, s \vDash x_i < y_j [a_ia_j]$, as desired.

Suppose next that $A(\overline{x})$ is of the form $\neg B(\overline{x})$. If $M, s \vDash \neg B [\overline{a}]$ then $M, w \vDash \neg B [\overline{a}]$ by the Hereditary condition. Now if $M, s \nvDash \neg B [\overline{a}]$ then $M, s^* \vDash  B [\overline{a}]$ and, by a double application of the inductive hypothesis, $M, s \vDash  B [\overline{a}]$ and $M, w^* \vDash  B [\overline{a}]$, which in turn implies that $M, w\nvDash  \neg B [\overline{a}]$, as wanted.

Let $A(\overline{x})$ be of the form $B \rightarrow C$. One direction of the result follows by the Hereditary condition again. For the converse, suppose that $M, s \nvDash B \rightarrow C [\overline{a}]$, so there are $y, z \in W$ such that $M, y \vDash B  [\overline{a}]$ and $M, z \nvDash C  [\overline{a}]$. By inductive hypothesis, it follows that $M, s \vDash B  [\overline{a}]$ and $M, s \nvDash C  [\overline{a}]$. By assumption there are $u, v \in W$ such that $Rwuv$ (this is the only point in the proof where we need this assumption). Consequently, by inductive hypothesis again that $M, u \vDash B  [\overline{a}]$ and $M, v \nvDash C  [\overline{a}]$. But then $M, w \nvDash B \rightarrow C [\overline{a}]$, as desired.

The remaining cases are similarly straightforward.
\end{proof}

\begin{pro}\label{c} \emph{(Cantor's Theorem)}
Let \textbf{L} be any logic  extending \textbf{B} + \textsf{K} such that its frames satisfy the condition $\forall x \exists y, z (Rxyz)$, $M=\langle W,R,D,^*, s, V\rangle, N= \langle W^{\prime},R^{\prime},D^{\prime},^{*^{\prime}}, s^{\prime}, V^{\prime}\rangle$  two countable \textbf{L}-models for DLO, and $a_1, \dots , a_n \in D$, $b_1, \dots , b_n \in D^{\prime}$ sequences such that $a_1 < \dots < a_n$ and $b_1 < \dots < b_n$ hold at $s$ and $s^{\prime}$ respectively. Then there is a bijective mapping $f: D \longrightarrow D^{\prime}$ such that $f(a_i)=b_i$ and for any formula $A(\overline{x})$ of DLO and  sequence of elements $\overline{a}$ of D,
\begin{center}
 $M, s \vDash A [\overline{a}]$ iff  $N, s^{\prime} \vDash A [f(\overline{a})]$. 
\end{center}
\end{pro}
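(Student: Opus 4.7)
The plan is to run the classical back-and-forth construction, exploiting Proposition \ref{2} (world-indifference in \textbf{B}+\textsf{K} models of DLO) to reduce the inductive step on formulas to something very close to the classical Cantor argument.

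First I would enumerate $D \setminus \{a_1, \dots, a_n\}$ as $c_1, c_2, \dots$ and $D' \setminus \{b_1, \dots, b_n\}$ as $c'_1, c'_2, \dots$, then build a chain of finite partial injections $f_0 \subseteq f_1 \subseteq \cdots$ with $f_0 = \{(a_i, b_i) : 1 \leq i \leq n\}$. At odd stage $2k+1$ I pick the lowest-indexed $c \in D \setminus \mathrm{dom}(f_{2k})$ and, using density (A4), the absence of endpoints (A5, A6), and trichotomy (A2) in $N$, choose some $c' \in D' \setminus \mathrm{rng}(f_{2k})$ that bears the same order relation at $s'$ to every element of $\mathrm{rng}(f_{2k})$ that $c$ bears at $s$ to the corresponding element of $\mathrm{dom}(f_{2k})$. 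Even stages are symmetric on the $N$ side. The union $f = \bigcup_k f_k$ is then a bijection $D \to D'$ extending $f_0$, and by construction $M, s \vDash d < e$ iff $N, s' \vDash f(d) < f(e)$ for all $d, e \in D$; the analogous clause for $=$ is trivial since identity is metatheoretic.

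Next I would prove by induction on $A(\overline{x})$ that $M, s \vDash A[\overline{a}]$ iff $N, s' \vDash A[f(\overline{a})]$. The atomic case is handled by construction; $\land$ and $\lor$ are routine; $\exists$ and $\forall$ use surjectivity and injectivity of $f$ in the usual way. The interesting cases are $\neg$ and $\rightarrow$, where one would ordinarily have to reason about worlds other than $s, s'$. Here Proposition \ref{2} collapses the situation: $M, s \vDash \neg B[\overline{a}]$ iff $M, s^* \nvDash B[\overline{a}]$ iff $M, s \nvDash B[\overline{a}]$, which by the inductive hypothesis is equivalent to $N, s' \nvDash B[f(\overline{a})]$, i.e., to $N, s' \vDash \neg B[f(\overline{a})]$. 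Likewise, $M, s \vDash B \rightarrow C[\overline{a}]$ holds iff for all $w_1, w_2$ with $Rsw_1w_2$, $M, w_1 \vDash B[\overline{a}]$ implies $M, w_2 \vDash C[\overline{a}]$; applying Proposition \ref{2} at each such world (and using the seriality hypothesis to ensure at least one pair $w_1, w_2$ with $Rsw_1w_2$ exists, so that the universally quantified implication has content) reduces this to the single clause ``$M, s \vDash B[\overline{a}]$ implies $M, s \vDash C[\overline{a}]$,'' after which the inductive hypothesis and another application of Proposition \ref{2} on the $N$ side close the step.

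The main obstacle, absent Proposition \ref{2}, would be exactly these implication and negation cases: in a genuinely intensional setting one would need to control how $f$ interacts with auxiliary worlds and with the Routley star on them, which would require either enriching $f$ with a matching of worlds or abandoning the back-and-forth template entirely. Here that difficulty has been entirely absorbed into the prior proposition, so what remains is little more than the classical Cantor argument transcribed into the Routley-Meyer vocabulary.
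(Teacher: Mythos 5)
Your proposal is correct and follows essentially the same route as the paper: a back-and-forth construction of $f$ matching the order at the base worlds, followed by an induction on formulas in which Proposition \ref{2} collapses the $\neg$ and $\rightarrow$ clauses to the designated world (the paper uses $Rsss$, which holds in every \textbf{B}-model, rather than seriality for the implication step, but that is immaterial). The only elision is that the negation case also needs one further application of Proposition \ref{2} on the $N$ side (to pass from $N,s' \nvDash B$ to $N,s'^{*'} \nvDash B$), which your general framing clearly covers.
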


\begin{proof}
The function $f$ is built by a back-and-forth  argument.\footnote{See \cite{Marker02} for examples of such arguments in a classical setting.} We make sure by construction that  $M, s \vDash x<y [\overline{a}]$ iff  $N, s^{\prime} \vDash x< y [f(\overline{a})]$.

Let $c_0, c_1, c_2 \dots $ and $d_0, d_1, d_2 \dots $  be enumerations of $D\setminus \{a_1, \dots,  a_n\}$ and $D^{\prime}\setminus \{b_1, \dots,  b_n\}$ respectively. Next we build a sequence $f_0 \subseteq f_1 \subseteq \dots$ of bijections $f_i: D_i \longrightarrow D^{\prime}_i $ such that $ D_i \subset D$ and $ D^{\prime}_i \subset D^{\prime}$ are finite and $M, s \vDash x<y [a, b]$ iff  $N, s^{\prime} \vDash x< y [f_i(a), f_i(b)]$  for any $a, b \in D_i$. The idea is to build the sequences such that $\bigcup D_i =D$ and $\bigcup D^{\prime}_i =D^{\prime}$, so $\bigcup f_i$ would be the $f$ required by the theorem.

{\sc Stage} 0: Just put $D_0 = \{a_1, \dots,  a_n\}$, $D^{\prime}_0 = \{b_1, \dots,  b_n\}$ and let $f_0$ be the mapping $a_i \mapsto b_i$.

{\sc Stage} n+1=2m+1: When  $c_m \in D_n$ then just put $D_{n+1}=D_n$, $D^{\prime}_{n+1}$ and $f_{n+1}=f_n$.
On the other hand suppose that $c_m \notin D_n$. We want $D_{n+1}$ to be $D_n \cup \{c_m\}$ in this case. This means that we need to choose $d \in D^{\prime}\setminus D^{\prime}_{n}$ carefully for $f_{n+1}=f_n \cup \{\langle c_m, d\rangle\}$ and  $D^{\prime}_{n+1}= D^{\prime}_{n}\cup\{d\}$ be as required by the construction.

Keep in mind that we are working with models of DLO. Hence, there are three mutually exclusive possibilities:
\begin{itemize}
\item[(1)] $M, s \vDash x<y [c, c_m]$ for each $c \in D_n$
\item[(2)] $M, s \vDash x<y  [c_m, c]$ for each $c \in D_n$
\item[(3)] there are $a, b \in  D_n$ such that $M, s \vDash x<y  [a, b]$,   $M, s \vDash x<y < z  [a, c_m, b]$ and for all $c\in D_n$ either  $M, s \vDash x<y \vee x=y  [c, a]$ or $M, s \vDash x<y \vee x=y  [b, c]$.
\end{itemize}

If (1) holds, then since  $D_n^{\prime}$ is finite, we can find $d  \in D^{\prime}\setminus D^{\prime}_{n}$ such that $N, s^{\prime} \vDash x<y [c, d]$ for each $c \in  D^{\prime}_{n}$ (this comes essentially from the same argument showing that $DLO$ has no models with a finite domain). A similar thing is the case when (2) holds. In case (3), given that $N, s^{\prime} \vDash x<y  [f_n(a), f_n(b)]$ by construction, since $N$ is a model of $DLO$ just take $d  \in D^{\prime}\setminus D^{\prime}_{n}$ such that $N, s^{\prime} \vDash x< z<y  [f_n(a), d, f_n(b)]$.

{\sc Stage} n+1=2m+2: When $d_m \in D^{\prime}_n$, we simply let $D_{n+1}=D_n$, $D^{\prime}_{n+1}= D^{\prime}_{n}$ and $f_{n+1}=f_n$. Otherwise, we proceed in an analogous way to the previous case making sure that $D_{n+1}$, $D^{\prime}_{n+1}$ and $f_{n+1}$ are such that $d_m \in D^{\prime}_{n+1}$.

After having constructed $f$, we establish the equivalence in the proposition by induction on the complexity of $A(\overline{x})$. If $A(\overline{x})$ is atomic, then the result follows by construction of $f$.

Suppose next that $A(\overline{x})$ is of the form $\neg B(\overline{x})$. If $M, s \vDash \neg B [\overline{a}]$, so $M, s^* \nvDash  B [\overline{a}]$, hence $M, s \nvDash  B [\overline{a}]$ by Proposition \ref{2}. So $N, s^{\prime} \nvDash  B [f(\overline{a})]$ by inductive hypothesis, and $N, s^{\prime *^{\prime}} \nvDash  B [f(\overline{a})]$ by the previous theorem again, so $N, s^{\prime *^{\prime}} \vDash \neg  B [f(\overline{a})]$ as desired. The converse is symmetric.

Now let $A(\overline{x})$ be of the form $B \rightarrow C$. Suppose that $M, s \vDash B \rightarrow C [\overline{a}]$. Assume further that $N, s^{\prime} \vDash B [f(\overline{a})]$, so by inductive hypothesis, it must be that $M, s  \vDash B [\overline{a}]$. Since $Rsss$, $M, s  \vDash C [\overline{a}]$, which, by inductive hypothesis, means that  $N, s^{\prime} \vDash C [f(\overline{a})]$. Then, by Proposition \ref{2}, it must be that $N, w \vDash B [f(\overline{a})]$ only if $N, w \vDash C [f(\overline{a})]$ for any $w \in W^{\prime}$, so, in particular,  $N, s^{\prime} \vDash B \rightarrow C [f(\overline{a})]$ holds. The converse follows by symmetry.

The remaining cases are straightforward and the quantificational case uses the fact that $f$ is surjective.
\end{proof}

\begin{pro} \label{3} Let \textbf{L} be any logic  extending \textbf{B} + \textsf{K} such that its frames satisfy the condition $\forall x \exists y, z (Rxyz)$. Then DLO in  \textbf{L} is complete.

\end{pro}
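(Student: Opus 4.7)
The plan is to deduce completeness from Cantor's theorem (Proposition \ref{c}) by reducing arbitrary \textbf{L}-models to countable ones. Given any two \textbf{L}-models $M, N$ of DLO, I want to show they satisfy the same sentences. The strategy is to first construct, via a downward Löwenheim-Skolem-style argument, countable \textbf{L}-models $M', N'$ of DLO satisfying the same sentences as $M, N$ respectively. Since DLO has no finite models (by the earlier proposition in \S \ref{math}), the domains of $M'$ and $N'$ must be denumerably infinite. Cantor's theorem applied to $M', N'$ with the empty initial sequence (so the ordering premise $a_1 < \dots < a_n$ is vacuous) then yields a bijection witnessing that $M'$ and $N'$ satisfy exactly the same sentences, and chaining this with the Löwenheim-Skolem step completes the argument.

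The key technical step is the downward Löwenheim-Skolem construction in the Routley-Meyer setting. The idea is to keep the frame $\langle W, R, {}^*, s\rangle$ fixed and to restrict only the domain: starting from any countable subset of $D$, iteratively close under witnesses for existentially quantified subformulas in a Tarski--Vaught-style closure, and restrict the valuation accordingly. Proposition \ref{2} greatly simplifies the preservation argument, since truth of DLO formulas in this setting does not depend on the world: the semantic clauses for $\rightarrow$ and $\neg$ effectively collapse to their classical counterparts at the distinguished world $s$, so the preservation induction reduces to the standard classical handling of quantifiers over the domain, which is exactly what the Tarski--Vaught closure achieves.

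The main obstacle is verifying that the restricted structure is itself an \textbf{L}-model of DLO. The frame conditions (including seriality and the \textsf{K} condition) are preserved automatically since $W, R, {}^*, s$ are untouched. The universal DLO axioms transfer to substructures trivially, while the existential axioms A4--A6 are preserved by virtue of the Tarski--Vaught closure. Sentence preservation is then a standard induction on formula complexity, aided at every stage by Proposition \ref{2}. With these preservation facts in hand, the two applications of the construction together with Cantor's theorem yield that $M$ and $N$ satisfy the same sentences, which is precisely the definition of completeness.
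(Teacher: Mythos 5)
Your proposal is correct and follows essentially the paper's own route: reduce both models to countable models of DLO (whose domains are forced to be infinite) and then invoke Proposition \ref{c} with an empty initial tuple to conclude they satisfy the same sentences at the base world. The only divergence is the L\"owenheim--Skolem step---the paper simply cites the classical downward L\"owenheim--Skolem theorem through the satisfaction-preserving translation of Routley--Meyer models into classical first-order structures, whereas you build a countable-domain submodel by hand via a Tarski--Vaught closure with the frame $\langle W,R,{}^*,s\rangle$ kept fixed; this is legitimate, since Proposition \ref{2} lets you take all existential witnesses (and, because negation behaves classically at $s$ under Proposition \ref{2}, counterwitnesses for universals via $\exists x\neg B$) at the world $s$ alone, and the proof of Proposition \ref{c} only uses countability of the domains, not of $W$.
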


\begin{proof} Suppose not, that is, there are two \textbf{L}-models $M=\langle W,R,D,^*, s, V\rangle$ and $ N= \langle W^{\prime},R^{\prime},D^{\prime},^{*^{\prime}}, s^{\prime}, V^{\prime}\rangle$ for $DLO$ such that there is a sentence $A$ of $DLO$ for which $M, s \vDash A$ but $N, s^{\prime} \nvDash A$. The cardinality of both $D$ and $D^{\prime}$ has to be infinite. Since the language of quantificational relevant logic can be embedded in a satisfaction preserving way into the language of classical first order logic, by the  classical downward L\"owenheim-Skolem theorem, $M$ and $N$ can both be assumed to be countable. By Proposition \ref{c}, they have to make exactly the same sentences hold at  $s$ and $s^{\prime}$, which contradicts our assumption. \end{proof}

\begin{pro}  Let \textbf{L} be any logic  extending \textbf{B} + \textsf{K} such that its frames satisfy the condition $\forall x \exists y, z (Rxyz)$ and \textbf{L} is strongly complete with respect to the Routley-Meyer semantics. Then DLO in  \textbf{L} is negation complete.

\end{pro}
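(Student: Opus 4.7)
My strategy is to use Proposition \ref{3} (completeness of DLO in $\mathbf{L}$) to reduce negation completeness to the classical dichotomy $A$ vs.\ $\neg A$ on a single, particularly simple $\mathbf{L}$-model of DLO.

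First, I would exhibit a ``classical'' $\mathbf{L}$-model $M_0$ of DLO by taking $W_0 = \{s_0\}$, $R_0 = \{\langle s_0, s_0, s_0\rangle\}$, $s_0^{*} = s_0$, $D_0 = \mathbb{Q}$, and interpreting $<$ as the standard strict ordering on the rationals. All frame conditions for $\mathbf{B} + \mathsf{K}$, together with the seriality condition $\forall x \exists y, z(Rxyz)$, are satisfied trivially on this one-world frame, and each DLO axiom holds at $s_0$ in the usual classical way. In $M_0$, since $s_0^* = s_0$ and $W_0$ is a singleton, $M_0, s_0 \vDash \neg A$ iff $M_0, s_0 \nvDash A$ for every sentence $A$; hence exactly one of $M_0, s_0 \vDash A$ and $M_0, s_0 \vDash \neg A$ holds.

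Next I would invoke Proposition \ref{3}: DLO in $\mathbf{L}$ is complete, so any two $\mathbf{L}$-models of DLO agree on which sentences hold at their designated worlds. Given an arbitrary sentence $A$, the appropriate disjunct above occurs in $M_0$, and whichever it is transfers to every $\mathbf{L}$-model $N$ of DLO: either $N, s' \vDash A$ for every such $N$, giving $T \vDash A$, or $N, s' \vDash \neg A$ for every such $N$, giving $T \vDash \neg A$. This suffices for negation completeness.

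The main obstacle I anticipate is simply verifying that $M_0$ validates all the frame conditions that a given $\mathbf{L}$ extending $\mathbf{B} + \mathsf{K}$ might impose; fortunately, every standard ternary frame condition that appears in the relevant literature (including those defining $\mathbf{R}$ and $\mathbf{RM}$) is vacuously satisfied by a single-world frame with $R = \{\langle s, s, s\rangle\}$ and $s^* = s$. I note that this plan does not appear to use the strong completeness hypothesis; it may have been included with an alternative Henkin-style argument in mind, or as a safety net in case some exotic $\mathbf{L}$ rules out the degenerate model $M_0$, in which case one would instead build a canonical model from a Lindenbaum-maximal extension of DLO and use strong completeness to convert that syntactic construction into an $\mathbf{L}$-model whose theory at $s$ is already negation complete before transferring through Proposition \ref{3}.
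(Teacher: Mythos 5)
Your overall strategy (decide $A$ on one model, then transfer the decision to all models via completeness) is close in spirit to the paper's, but there is a genuine gap in the step that is supposed to produce the decision. You need the one-world classical structure $M_0$ to be an $\mathbf{L}$-model, and the hypotheses do not guarantee this: $\mathbf{L}$ is \emph{any} logic extending $\mathbf{B}+\mathsf{K}$ whose frames satisfy seriality, i.e.\ a logic semantically determined by some class of frames that need only be contained in the class of serial $\mathbf{B}+\mathsf{K}$ frames. Such a class may simply exclude the degenerate one-world frame (nothing in the hypotheses forces every frame condition of $\mathbf{L}$ to be satisfiable by $W=\{s\}$, $R=\{\langle s,s,s\rangle\}$, $s^*=s$), so ``every standard condition in the literature holds there'' does not discharge the obligation for arbitrary $\mathbf{L}$. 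Your fallback via a Lindenbaum-maximal extension and strong completeness is only gestured at, and it would face the usual obstacle that in relevant logics maximal (prime) theories need not be negation-complete, so it does not repair the argument as stated.

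The missing idea, and the route the paper takes, is that no particular model is needed: Proposition 2 (world-invariance of satisfaction in serial $\mathbf{B}+\mathsf{K}$ models of DLO) already makes negation behave classically at $s$ in \emph{every} $\mathbf{L}$-model $M$ of DLO, since $M,s^*\vDash A$ iff $M,s\vDash A$; hence $M,s\nvDash A$ immediately gives $M,s\vDash\neg A$. So each model decides each sentence, and Proposition 3 (completeness) then shows that all models decide it the same way, yielding $\mathrm{DLO}\vDash A$ or $\mathrm{DLO}\vDash\neg A$ (vacuously if there are no models at all). This argument is uniform in $\mathbf{L}$ and needs no existence claim about a special ``classical'' model; incidentally, like your plan, it does not actually invoke the strong completeness hypothesis, which plays no visible role in the paper's proof either.
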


\begin{proof} Take any sentence $A$ of $DLO$. Consider an arbitrary \textbf{L}-model $M$ for $DLO$. Then either $M, s \vDash A$ or  $M, s \nvDash A$. The second implies $M, s^* \nvDash A$ by Proposition \ref{2}, with $s^*$ substituted for $w$. This is equivalent to $M, s \vDash \neg A$. Since, according to Proposition  \ref{3}, all \textbf{L}-models of $DLO$ make exactly the same sentences true, it must be the case that either every \textbf{L}-model for $DLO$ make $A$ the case or that every \textbf{L}-model for $DLO$ make $\neg A$ hold. Hence, either $DLO \vDash A$ or $DLO \vDash \neg A$.

\end{proof}

\begin{thm}   Let \textbf{L} be any logic  extending \textbf{B} + \textsf{K} such that its frames satisfy the condition $\forall x \exists y, z (Rxyz)$ and \textbf{L} is strongly complete with respect to the Routley-Meyer semantics. Then DLO in  \textbf{L} has QE.

\end{thm}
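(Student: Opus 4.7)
The plan is to reduce QE to a finite case analysis on the order types of the free variables of $A$, exploiting Proposition \ref{2} (truth in any DLO model is world-invariant, collapsing the logic to something classical at $s$) together with Cantor's Theorem, Proposition \ref{c}, to show that the truth of any formula on a tuple depends only on its order type. Given $A(x_1, \ldots, x_n)$, I first enumerate the finite list of quantifier-free formulas $\pi_1(\overline{x}), \ldots, \pi_k(\overline{x})$, where each $\pi_i$ is a conjunction that for every pair $(j, l)$ asserts exactly one of $x_j = x_l$, $x_j < x_l$, or $x_l < x_j$; by A2 and transitivity, $DLO \vDash \bigvee_{i=1}^{k} \pi_i$, and combinations incompatible with the DLO axioms simply contribute vacuously.

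The key step is the dichotomy: for each $\pi_i$, either $DLO \vDash \pi_i \rightarrow A$ or $DLO \vDash \pi_i \rightarrow (A \rightarrow \bot)$. Suppose for contradiction that some DLO model $M$ and tuple $\overline{a}$ satisfy $\pi_i \land A$ at $s$, while another DLO model $N$ and tuple $\overline{b}$ satisfy $\pi_i$ at $s'$ without satisfying $A$ there. By the downward L\"owenheim-Skolem reduction invoked in the proof of Proposition \ref{3}, both models may be assumed countable while preserving the truth of $A$ on $\overline{a}$ and $\overline{b}$. Extracting the strictly increasing subsequences of distinct coordinates $a_{i_1} < \cdots < a_{i_m}$ and $b_{i_1} < \cdots < b_{i_m}$ (which correspond by $\pi_i$), Proposition \ref{c} yields a bijection $f : D_M \to D_N$ with $f(a_{i_j}) = b_{i_j}$ and $M, s \vDash \phi[\overline{c}]$ iff $N, s' \vDash \phi[f(\overline{c})]$ for every formula $\phi$. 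Since $\pi_i$ specifies which coordinates coincide, $f(\overline{a}) = \overline{b}$ componentwise, giving $M, s \vDash A[\overline{a}]$ iff $N, s' \vDash A[\overline{b}]$, a contradiction.

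With the dichotomy established, let $B$ be the disjunction of those $\pi_i$ for which $DLO \vDash \pi_i \rightarrow A$ (taking $B = \bot$ if none). To verify $DLO \vDash A \leftrightarrow B$, use Proposition \ref{2} together with $Rsss$ to observe that in any DLO model $M$, $M, s \vDash A \rightarrow B[\overline{a}]$ is equivalent to the classical implication that $M, s \vDash A[\overline{a}]$ entails $M, s \vDash B[\overline{a}]$, and similarly in the reverse direction. The forward direction then runs: $\overline{a}$ realises some $\pi_i$ at $s$, and $DLO \vDash \pi_i \rightarrow (A \rightarrow \bot)$ would force $M, s \nvDash A[\overline{a}]$ (contradicting our hypothesis), so the dichotomy places $\pi_i$ among the disjuncts of $B$. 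The reverse direction is immediate from the choice of $B$. The main obstacle will be the careful bookkeeping in the dichotomy step---adapting Cantor's Theorem, which is stated for strictly increasing sequences, to tuples with repeated coordinates, and checking that the L\"owenheim-Skolem reduction preserves both the order type and the truth of $A$ on the relevant tuples; the remainder follows the pattern of the standard classical QE argument for DLO, translated into the Routley-Meyer setting via Proposition \ref{2}.
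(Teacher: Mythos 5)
Your proof is correct and takes essentially the same approach as the paper's: collapse all worlds to the base world $s$ via Proposition \ref{2}, decompose tuples by the finitely many order-type formulas, and use the back-and-forth bijection of Proposition \ref{c} to show that satisfaction of $A$ at $s$ depends only on the order type of the tuple. The only organizational difference is that the paper fixes a single countable model, defines the disjunction of realized order types there, and transfers to all models via completeness (Proposition \ref{3}), whereas you inline that transfer as a cross-model dichotomy using the L\"owenheim--Skolem reduction and Proposition \ref{c} directly; the mathematical content is the same.
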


\begin{proof} Take any countable model $M=\langle W,R,D,^*, s, V\rangle$  of $DLO$ in  \textbf{L}. First, if $A$ is a sentence and $M, s \vDash A$, using Proposition \ref {2}, it must be that $M, s \vDash A \leftrightarrow x =x$, which means that $DLO \vDash  A \leftrightarrow x =x$ by the completeness of $DLO$. On the other hand if $M, s \nvDash A$, we similarly obtain that $DLO \vDash  A \leftrightarrow x < x$.

Suppose now that $A(\overline{x})$ is a formula in the free variables $x_1, \dots, x_n$. Let $h: \{\langle i, j\rangle : 1 \leq  i < j \leq n\} \longrightarrow \{0, 1, 2\}$ and consider the formula $B_h(x_1, \dots, x_n)$ defined as follows:

\begin{center}
$ \bigwedge_{h(i, j)=0} x_i=x_j \land   \bigwedge_{h(i, j)=1} x_i<x_j  \land  \bigwedge_{h(i, j)=2} x_j<x_i$.
\end{center}
Write $\Lambda_A$ for the set of all $B_h(x_1, \dots, x_n)$ with $h: \{\langle i, j\rangle : 1 \leq  i < j \leq n\} \longrightarrow \{0, 1, 2\}$ such that there is a sequence of elements $\overline{a}$ of $D$ for which $M, 0 \vDash A(x_1, \dots, x_n) \land B_h(x_1, \dots, x_n) [\overline{a}]$. $\Lambda_A$ will, of course, always be finite.

If $\Lambda_A = \emptyset$, then for all sequences $\overline{a}$ of $D$ it must be that $M, s \nvDash A[\overline{a}]$ for note that every   $\overline{a}$ would satisfy some $B_h(x_1, \dots, x_n)$. This means that $M, s \vDash \forall x_1, \dots, x_n (A \leftrightarrow x_{n+1} < x_{n+1})$, which implies that $DLO \vDash \forall x_1, \dots, x_n (A \leftrightarrow x_{n+1} < x_{n+1})$.

On the other hand, if $\Lambda_A \neq \emptyset$, we claim that  $M, s \vDash \forall x_1, \dots, x_n (A \leftrightarrow \bigvee  \Lambda_A)$, which implies that  $DLO \vDash \forall x_1, \dots, x_n (A \leftrightarrow \bigvee  \Lambda_A)$. The half  $M, s \vDash \forall x_1, \dots, x_n (A \rightarrow \bigvee  \Lambda_A)$ is clear since, as we said, every sequence of objects of $D$ satisfies some $B_h(x_1, \dots, x_n)$. For the other half, let $a_1, \dots a_n \in D$ and $M, s \vDash B_h[a_1, \dots a_n]$ for some $B_h(x_1, \dots, x_n) \in \Lambda_A$. By definition of $\Lambda_A$ there is a sequence of objects $b_1, \dots b_n \in D$ such that $M, s \vDash A(x_1, \dots, x_n) \land B_h(x_1, \dots, x_n) [b_1, \dots b_n ]$. Now take the mapping $f: D \longrightarrow D$ such that $f(a_i)=b_i$ given by our version of Cantor's theorem. Since  $M, s \vDash A [b_1, \dots b_n ]$, it must be that $M, s \vDash A [a_1, \dots a_n ]$. This shows that $M, s \vDash \forall x_1, \dots, x_n ( \bigvee  \Lambda_A \rightarrow A)$, as desired. 

\end{proof}

An example of a logic in which $DLO$ will have all the nice features described above is  \textbf{B} + \textsf{C} +\textsf{K} (since all frames for this logic satisfy $\forall x(Rxsx)$).

\begin{pro}   Let \textbf{L} be any logic  extending \textbf{B} + \textsf{K}. Then for any model $M=\langle W,R,D,^*, s, V\rangle$ of $DLO$ in \textbf{L} and formula $A(\overline{x})$  there is a quantifier free formula $B(\overline{x})$ such that for every sequence of objects $\overline{a}$ in $D$, $M, s\vDash  A[\overline{a}]$ iff $ M, s \vDash B[\overline{a}]$.
\end{pro}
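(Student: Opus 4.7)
The plan is to imitate the proof of Theorem~\ref{t} above, allowing the quantifier-free formula $B$ to depend on the given model $M$; this way we bypass both the seriality and the strong-completeness hypotheses used there.

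By a standard L\"owenheim-Skolem argument applied to the classical first-order translation of the Routley-Meyer semantics, we may assume without loss of generality that $M$ is countable. For a formula $A(\overline{x})$ with free variables $x_1,\dots,x_n$, we reuse the order-type formulas $B_h(\overline{x})$ of the proof of Theorem~\ref{t}, one for each $h\colon\{(i,j):1\le i<j\le n\}\to\{0,1,2\}$. Let $\Lambda_A$ be the finite set of those $B_h$ for which there is a tuple $\overline{b}\in D^n$ with $M,s\vDash (A\land B_h)[\overline{b}]$, and set $B:=\bigvee\Lambda_A$ (or $B:=x_1<x_1$ if $\Lambda_A$ is empty). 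Clearly $B$ is quantifier-free.

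The forward direction $M,s\vDash A[\overline{a}]\Rightarrow M,s\vDash B[\overline{a}]$ is immediate from axioms A1 and A2 at $s$: every $\overline{a}$ satisfies a unique $B_h$, which lies in $\Lambda_A$ by construction. The backward direction reduces to the key lemma: whenever $\overline{a},\overline{b}\in D^n$ satisfy the same order type $B_h$ at $s$, we have $M,s\vDash A[\overline{a}]\Leftrightarrow M,s\vDash A[\overline{b}]$. I would establish this via a back-and-forth argument internal to $M$, producing an order-automorphism $f\colon D\to D$ of $(D,<_s)$ with $f(\overline{a})=\overline{b}$, and then inducting on the complexity of $A$ to show that $f$ preserves satisfaction at $s$.

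The main obstacle is the inductive step for the connectives $\rightarrow$ and $\neg$ and for the quantifiers, where truth at $s$ depends on the atomic valuation at worlds other than $s$ through the clauses for $\rightarrow$ and $\neg$. In Proposition~\ref{c} this was handled by Proposition~\ref{2}, which relies on seriality; here no such reduction is available. The strategy I would pursue is to enhance the back-and-forth so that $f$ simultaneously preserves the valuation of $<$ at every world of $M$, treating the countable model as the multi-sorted structure $\langle D,<_s,\{<_\alpha\}_{\alpha\in W}\rangle$ with countably many relations to preserve. It is exactly here that the hypothesis $\textsf{K}$ matters: the frame condition $R\alpha\beta\gamma\Rightarrow Rs\alpha\gamma$ together with heredity forces a monotonic compatibility between $<_s$ and each $<_\alpha$, which keeps the density-based extension step of the back-and-forth viable at every stage. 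With such an $f$ in hand, the inductive argument for the key lemma goes through uniformly in every connective case, and the proposition follows.
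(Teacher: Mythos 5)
Your overall architecture is the same as the paper's: reduce to a countable domain by L\"owenheim--Skolem, reuse the $\Lambda_A$ construction from the preceding theorem, and reduce the backward direction to an internal, Cantor-style automorphism of the model. The gap is precisely at the step you flag as the main obstacle. You propose a back-and-forth preserving all the relations $<_\alpha$ ($\alpha\in W$) simultaneously, and you justify its viability by a ``monotonic compatibility'' between $<_s$ and each $<_\alpha$ obtained from \textsf{K} and heredity. But mere compatibility --- the inclusion $<_s\ \subseteq\ <_\alpha$, which is what \textsf{K} plus heredity yields via $Rss\alpha$ --- is not enough to run a back-and-forth for countably many orders at once: if some $<_\alpha$ properly extended $<_s$, then matching the $<_s$-order type of a newly treated element would not control its $<_\alpha$-relations to the finitely many elements already matched, and the extension step could fail. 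What actually makes the argument work, and what you need to prove rather than gesture at, is the stronger fact that in any model of DLO over \textbf{B} + \textsf{K} the interpretation of $<$ at \emph{every} world coincides with its interpretation at $s$. This is exactly the basis case of the induction in Proposition \ref{2}, and it does not use seriality: $<_s\ \subseteq\ <_\alpha$ by heredity from $Rss\alpha$ (this is where \textsf{K} enters); conversely, if $a<_\alpha b$ but not $a<_s b$, trichotomy at $s$ gives $a=b$ or $b<_s a$, and in either case A3 and A1 evaluated at $s$ together with $Rs\alpha\alpha$ force $a<_\alpha a$ and then a contradiction.

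Once this collapse is in place, your ``multi-sorted'' structure $\langle D,<_s,\{<_\alpha\}_{\alpha\in W}\rangle$ is just the single dense order $(D,<_s)$, the ordinary Cantor back-and-forth applies verbatim, and a routine induction on complexity shows that the resulting $f$ preserves satisfaction of every formula at \emph{every} world --- which is the form of the preservation statement your induction actually needs in the $\neg$ and $\rightarrow$ cases, not preservation at $s$ alone. With that lemma inserted, your proof is the paper's proof.
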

\begin{proof} Using the downward L\"owenheim-Skolem theorem and the fact that Routley-Meyer models are first order structures, it is not difficult to see that it suffices to establish the theorem for models with a  countable domain of objects.
By examining the proof of the previous theorem, it is easy to see how it can be adapted \emph{mutatis mutandis} up to the case that $\Lambda_A \neq \emptyset$. At this point what is required is an instance of Cantor's theorem saying that where $M=\langle W,R,D,^*, s, V\rangle$ is a countable \textbf{L}-model for DLO, and $a_1, \dots , a_n \in D$, $b_1, \dots , b_n \in D$ sequences such that $a_1 < \dots < a_n$ and $b_1 < \dots < b_n$ hold at $s$, then there is a bijective mapping $f: D \longrightarrow D$ such that $f(a_i)=b_i$ and for any formula $A(\overline{x})$ of DLO, $w \in W$ and  sequence of elements $\overline{a}$ of D,
\begin{center}
 $M, w \vDash A [\overline{a}]$ iff  $M, w \vDash A [f(\overline{a})]$. 
\end{center}
A simple modification of the proof of Cantor's theorem suffices to establish it. The construction of $f$ is basically as before.  To prove the above equivalence  we need to note first that for any sequence of objects $\overline{a}$, $M, w \vDash A [\overline{a}]$ iff $M, s \vDash A [\overline{a}]$ if $A$ is atomic. This follows by the same argument used for the basis of the inductive proof of Proposition \ref{2}. Now a simple induction on formula complexity takes care of the rest.

\end{proof}

So if the interest in QE was to reduce the complexity of the definable sets of any given model of $DLO$, \textbf{B} + \textsf{K} is a logic extending \textbf{B} which has this effect, and we see no way to regain this result without employing \textsf{K}, or some similar principle which has the effect of eliminating all the worlds in the Routley-Meyer model except the designated world. It is important to note that this theorem implies that one cannot use the same strategy we used to refute QE for $DLO$ in  \textbf{B} in order to refute QE for  $DLO$ in  \textbf{B} + \textsf{K}. For there is no model $M$ for $DLO$ in  \textbf{B} + \textsf{K} and formula $A(\overline{x})$ such that $A(\overline{x})$ does not define a set already definable by a quantifierless   formula at $s$ in $M$.

\begin{thm}   Let \textbf{L} be any logic  extending \textbf{B} such that its frames satisfy the condition $\forall x \exists y, z (Rxyz)$. Then ACF in  \textbf{L} has QE.
\end{thm}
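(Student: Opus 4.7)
The plan is to exploit the fact that the ACF signature contains no predicate symbols other than $=$, which is interpreted as metatheoretic classical identity at every world of any model. Consequently, in any $\mathbf{L}$-model $M = \langle W, R, D, ^*, s, V \rangle$ of ACF, each atomic formula has a truth value that does not depend on the world at which it is evaluated. I would promote this observation from atomic formulas to arbitrary formulas, and then reduce the problem to the classical QE theorem for ACF.

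The first step is a structural induction showing that, for any formula $A(\overline{x})$ and any sequence $\overline{a}$ from $D$, we have $M, w \vDash A[\overline{a}]$ iff $M, s \vDash A[\overline{a}]$ for every $w \in W$. The conjunction, disjunction and quantifier cases are immediate from the induction hypothesis, and the negation case follows by applying the hypothesis at $w^*$ and at $s$. The crucial case is $A \to B$: once the hypothesis supplies $A[\overline{a}]$ and $B[\overline{a}]$ with constant truth values, $(A \to B)[\overline{a}]$ holds at $w$ iff either $A[\overline{a}]$ is (constantly) false, $B[\overline{a}]$ is (constantly) true, or no pair $(w_1, w_2)$ satisfies $Rww_1 w_2$. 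The seriality condition $\forall x \exists y, z (Rxyz)$ rules out the last possibility, so $(A \to B)[\overline{a}]$ is world-independent, with a truth value matching the classical material implication of the constant values of $A[\overline{a}]$ and $B[\overline{a}]$. This is the main technical obstacle, and seriality is exactly what makes it go through.

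With the world-independence lemma in hand, I would pass to the classical first-order structure $M^c$ having domain $D$ and the same interpretations of the function symbols and constants as $M$. The lemma shows that each axiom of ACF in the relevant language holds at $s$ in $M$ iff its classical counterpart holds in $M^c$; in particular, $A \to \bot$ at $s$ collapses to classical negation of $A$ at $s$, again using seriality together with the fact that $\bot$ is nowhere satisfied. Hence $M^c$ is a classical model of ACF. I would then invoke the classical quantifier elimination theorem for ACF: any formula $A(\overline{x})$ is equivalent over classical ACF to a quantifier-free $B(\overline{x})$, so $A$ and $B$ are coextensive in $M^c$, and transferring back through the lemma they are coextensive at $s$ in $M$. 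One final application of the $\to$ analysis above (to both $A \to B$ and $B \to A$) upgrades this material coextensionality at $s$ to the relevant biconditional $M, s \vDash (A \leftrightarrow B)[\overline{a}]$. Since $M$ was an arbitrary $\mathbf{L}$-model of ACF, we obtain ACF $\vDash A \leftrightarrow B$, as required.
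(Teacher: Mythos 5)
Your proposal is correct and follows essentially the same route as the paper: a world-independence lemma proved by induction on formula complexity (with seriality handling the $\rightarrow$ case), which collapses $\rightarrow$ to material implication, followed by a reduction to classical QE for ACF via the classical structure read off from the interpretation of the function symbols and constants at $s$. The only cosmetic difference is that you transfer model-by-model through $M^c$, whereas the paper phrases the reduction as showing (contrapositively) that every theorem of classical ACF is a theorem of ACF in $\mathbf{L}$ -- the substance is the same.
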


\begin{proof} We start by showing that if $M=\langle W,R,D,^*, s, V\rangle$  is an \textbf{L}-model for ACF, $A(\overline{x})$ a formula of ACF, and $\overline{a}$ a sequence of elements of D. Then for any $w \in W$, $M, w \vDash A [\overline{a}]$ iff  $M, s \vDash A [\overline{a}]$. This follows by an easy induction on formula complexity as before. So, in these models, a formula of the form $A \rightarrow B$ comes down semantically to the same as a formula of the form $\neg A \vee B$, which means that all relevant formulas in these models are equivalent to $\rightarrow$-free formulas. Since ACF based on classical logic has QE, it suffices to show that every theorem of classical ACF is also a theorem of ACF  in  \textbf{L}.   But the contrapositive of this is easily established, for if  $M=\langle W,R,D,^*, s, V\rangle$  is an \textbf{L}-model for ACF and  $M, s \nvDash A $ for some relevant formula $A$ (which without loss of generality can be taken to be $\rightarrow$-free), then a classical model refuting $A$ is easily extracted by interpreting function symbols in $D$ as they are interpreted at world $s$ in the Routley-Meyer model.\footnote{This kind of argument would have also established QE for DLO for logics extending \textbf{B} + \textsf{K} and with frames satisfying seriality. However, our previous argument for DLO was much more informative.}  \end{proof}

The above proof works, in fact, for any theory whose classical counterpart has QE and that does not contain non-logical symbols other than function and constants.

This positive result (as the one for DLO) is, of course, sensitive to a change of our background assumptions on equality. In this paper we have taken the predicate to mean real identity under all circumstances. This position is certainly rejected by most relevant logicians, though. This is because it produces validities like $A \rightarrow \forall x (x=x)$ for any arbitrary $A$.  Relevant logicians would normally require just something like principles (R), (T) and (L) in the theorem below to hold. In such case, QE can be shown to fail by arguments similar to the ones in \S \ref{sec:fqe}. This indicates that there is very little hope for  positive results on QE when 
$=$ is allowed all the freedom most relevant logicians would grant it. In other words, for the dedicated relevant logician, QE fails for quite simple reasons in most circumstances.\footnote{Note, our results do not directly rule out building theories with QE by means of Skolemization, so we cannot make this claim quite unequivocally.}

\begin{thm} Suppose that all we require semantically from the predicate $=$ is that it satisfies the principles of reflexivity, transitivity, and a version of Leibniz's law
\begin{itemize}
\item[(R)] $\forall x x=x$,
\item[(T)]$\forall x, y, z (x=y \wedge y=z \rightarrow x=z)$,
\item[(L)]$\forall x, y (x=y \wedge A(x) \rightarrow A(y))$ for all relevant formulas $A$.
\end{itemize}
  
  Let \textbf{L} be any logic between \textbf{B} and \textbf{RM}. Then   QE fails for ACF in  \textbf{L}.
\end{thm}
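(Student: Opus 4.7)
The plan is to adapt the failure-of-QE arguments of Section~\ref{sec:fqe}, in particular the two-world construction of Theorem~\ref{t}, to ACF under the weakened equality axioms (R), (T), (L). The positive QE result for ACF in the preceding theorem rested essentially on $=$ being real identity at every world: this made the truth value of every atomic formula world-invariant (since the atoms of ACF are built from function symbols and constants, which are interpreted uniformly), which propagated by induction to all formulas and effectively collapsed the Routley-Meyer semantics to its classical reduct at $s$, so that classical QE for ACF transferred wholesale. Under (R), (T), (L) alone this collapse need not hold, since $=$ may have a strictly coarser extension at a non-designated world than at $s$, or be interpreted as a non-identity congruence at $s$ itself.

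Concretely, I would construct a two-world Routley-Meyer model $M=\langle\{s,t\},R,D,{}^*,s,V\rangle$ in the style of the models of Theorems~3 and \ref{t}, with $R$ and ${}^*$ chosen to validate \textbf{RM} and with ${}^*$ swapping $s$ and $t$. Since fields admit no non-trivial ring congruences, the domain $D$ cannot be a field outright: instead one takes $D$ to be a suitable ring or ring-like structure (for instance $F[\epsilon]/(\epsilon^2)$ over an ACF $F$, with the unary $^{-1}$ extended to non-units by convention, or $F\times G$ for an auxiliary algebra $G$) whose quotient by the congruence interpreting $=$ at $s$ is an ACF, so that the ACF axioms hold at $s$ modulo $=$. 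The interpretation of $=$ at $t$ is then taken to be a strictly coarser relation than at $s$, and the frame $R$ is arranged so that the cross-world obligations imposed by (L) from $s$ reduce to within-world Leibniz conditions, which are satisfied by the chosen congruence structure.

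With such an $M$ in hand, one exhibits a quantified witness formula $A(x)$ --- of a shape analogous to the witnesses in Theorems~3 and \ref{t}, for instance $\exists y\,(x=y \wedge \varphi(y))$ or $\forall y\,((x=y)\vee\neg(x=y))$ for a suitable quantifier-free $\varphi$ --- whose extension at $s$ in $M$ cannot coincide with the set defined by any quantifier-free ACF formula. The concluding step is a Lemma~\ref{1}-style induction, tailored to the ACF signature, showing that quantifier-free formulas can only define sets ``classically reachable'' from the identity interpretation at $s$, whereas the witness formula leverages the differing behaviour of $=$ at $t$ to carve out something strictly different. The principal obstacle throughout is the simultaneous satisfaction of (i) all ACF axioms at $s$ (especially $0=1\rightarrow\bot$ and the inverse axiom A7), (ii) (R), (T), (L) at every world (which, given that fields have no non-trivial congruences, forces $D$ away from being a field and requires careful design of the congruences interpreting $=$ at the two worlds), and (iii) the existence of a non-QE-reducible witness; coordinating these demands through the choice of $D$, $R$, ${}^*$, and the two interpretations of $=$ is the delicate engineering at the heart of the argument.
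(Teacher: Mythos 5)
Your high-level shape (a two-world model in the style of Theorem \ref{t}, with $=$ interpreted differently at $s$ and at $t$, plus a comparison argument against quantifier-free formulas) is the right one, but the core of your construction rests on a misconception that derails it. You argue that, because fields admit no non-trivial congruences, the domain cannot be a field and one must pass to a ring such as $F[\epsilon]/(\epsilon^2)$ and read $=$ as a congruence whose quotient is an ACF. This is not needed, and it misses the idea that actually drives the counterexample: under (R), (T), (L) the interpretation of $=$ at a world need not be reflexive on the whole domain at all. Reflexivity is only required to be \emph{validated}, i.e.\ to hold at the designated world $s$ (and (T), (L), read at $s$ via the heredity proposition, reduce to world-by-world conditions that any restriction of real identity satisfies trivially). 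The paper's model keeps the domain a genuine algebraically closed field: take the algebraic numbers $\mathbb{A}$ and an uncountable classical elementary extension $\mathbb{A}'$, let $D=\mathbb{A}'$, interpret $=$ at $s$ as real identity on $\mathbb{A}'$ and at $t$ as identity \emph{restricted to} $\mathbb{A}$ (a subset of the interpretation at $s$, so heredity is fine). Then for $r\in\mathbb{A}'\setminus\mathbb{A}$ the formula $r=r$ fails at $t=s^*$, so $\neg(x=x)$ is satisfiable at $s$, and $N',s\vDash \exists x(\neg(x=y)\wedge\neg(x=x))[0]$. Your proposed quotient construction, by contrast, makes verifying the ACF axioms at $s$ (in particular A7, A11 and $0=1\rightarrow\bot$, which via the heredity proposition involve all worlds) genuinely problematic, and you give no argument that it can be done.

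The second gap is the concluding step. You gesture at ``a Lemma \ref{1}-style induction tailored to the ACF signature,'' but no such classification of quantifier-free formulas is available in a signature with $+,\times,-,{}^{-1},0,1$: atomic formulas are arbitrary polynomial equations and define many distinct sets, so the Lemma \ref{1} strategy does not transfer. What the paper does instead is the two-model comparison already used for RCF: let $N''$ be $N'$ with the domain cut down to $\mathbb{A}$ (so $=$ collapses to real identity at both worlds and the witness sentence fails at $s$), and prove by induction on quantifier-free $A$ that $N'',w\vDash A[\overline{a}]$ iff $N',w\vDash A[\overline{a}]$ for parameters $\overline{a}$ from $\mathbb{A}$. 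Since the witness formula $\exists x(\neg(x=y)\wedge\neg(x=x))$ distinguishes $N'$ from $N''$ at $s$ with parameter $0\in\mathbb{A}$ while no quantifier-free formula can, QE (and even the weaker interdeducibility version) fails. Your proposal lacks both the specific witness exploiting non-reflexivity and this restriction-of-domain comparison, and your suggested witnesses ($\exists y(x=y\wedge\varphi(y))$, $\forall y((x=y)\vee\neg(x=y))$) come with no argument that they escape quantifier-free definability.
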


\begin{proof} We argue in a rather similar way as in the failure of QE for RCF. However, this time we use the classical structure  $\langle \mathbb{A},  +_{\mathbb{A}}, -_{\mathbb{A}},  ^{-1_\mathbb{A}}, \times_{\mathbb{A}}, 0_{\mathbb{A}}, 1_{\mathbb{A}} \rangle$ of algebraic numbers and some uncountable elementary extension (again in the classical sense)
$\langle \mathbb{A}^{\prime}, +_{\mathbb{A}^{\prime}}, -_{\mathbb{A}^{\prime}},  ^{-1_\mathbb{A}^{\prime}}, \times_{\mathbb{A}^{\prime}}, 0_{\mathbb{A}^{\prime}}, 1_{\mathbb{A}^{\prime}} \rangle$.

Now consider a Routley-Meyer model $N^{\prime}=\langle W,D,R,^*,s,V\rangle$ where

\begin{itemize}
\item $W=\{s, t\}$
\item $D=\mathbb{A}^{\prime}$
\item All constants and function symbols of the language of ACF are interpreted on $\mathbb{A}^{\prime}$ as in  $\langle \mathbb{A}^{\prime},  +_{\mathbb{A}^{\prime}}, -_{\mathbb{A}^{\prime}},  ^{-1_\mathbb{A}^{\prime}}, \times_{\mathbb{A}^{\prime}}, 0_{\mathbb{A}^{\prime}}, 1_{\mathbb{A}^{\prime}} \rangle$.
\item $R=\{\langle s,s,s\rangle,\langle s,t,t\rangle,\langle t,s,s\rangle,\langle t,s,t\rangle,\langle t,t,s\rangle,\langle t,t,t\rangle, \langle s,t,s\rangle\}$
\item $V$ assigns to the symbol $=$ real equality on $\mathbb{A}^{\prime}$ at world $s$. At world $t$, $V$ assigns equality restricted to $\mathbb{A}$ as the interpretation of the symbol $=$. Note that since $V(=, t) \subset V(=, s)$, $V$ is a valuation in the Routley-Meyer semantics.  
\item $s$ is the designated world.
\item $^*=\{\langle s,t\rangle,\langle t,s\rangle\}$.
\end{itemize}

This structure is a model of  ACF. Next take any $r \in  \mathbb{A}^{\prime}\setminus \mathbb{A}$ (recall that we made sure that such an $r$ exists since $\mathbb{A}^{\prime}$ is uncountable while $\mathbb{A}$ is countable). Now, 
\begin{center}
 $N^{\prime}, s \vDash \neg(r=0) \wedge \neg (r=r)$,
\end{center}
so, 
\begin{center}
 $N^{\prime}, s \vDash \exists x( \neg(x=0) \wedge \neg (x=x))$.
\end{center}

Now take the model $N^{\prime \prime}$ of ACF which is just $N^{\prime}$ but with $D = \mathbb{A}$. With this modification, the predicate $=$ becomes real equality at every world in the model, so 
\begin{center}
  $N^{\prime \prime}, s \nvDash \exists x( \neg(x=0) \wedge \neg (x=x))$.
\end{center}
One can show by induction on formula complexity that if $A(x)$ is a quantifier free relevant formula in the language of ACF and $\overline{a}$ a sequence of objects from the domain $\mathbb{A}$ of  $N^{\prime \prime}$, $w \in W$, then
\begin{center}
$N^{\prime \prime}, w \vDash A[\overline{a}]$ iff $N^{\prime }, w \vDash A[\overline{a}]$.
\end{center}
 But with the above results this implies that the formula  in the free variable $y$, 
\begin{center}
$\exists x( \neg(x=y) \wedge \neg (x=x))$
\end{center}
 cannot be equivalent to any quantifierless formula of the language of ACF, so the theory does not have QE.
 
\end{proof}

\section{Conclusion}

One of the main morals is that the property of quantifier elimination for a number of well-known mathematical theories (involving non-logical \emph{predicate} -- and not just \emph{function} -- symbols) is easily lost in many relevant logics. Naturally, this is because logics that bring the property back are nearly classical, in that they enforce strong extensional equivalences which are rejected in a relevant setting. So when working in a more fine-tuned context, with fewer principles available, a property like QE   which marks a theory's simplicity should no longer be expected to hold. As anyone who has tried to do non-classical mathematics can testify,  there is a substantial jump in complexity from classical to non-classical mathematics.

It is of definite importance to the study of relevant logic and mathematics that \textsf{K} plays a key role in capturing classical results for DLO. Thinning does play a key role, and if the relevance properties of the logic involve rejecting \textsf{K}, then logics which respect relevance provide a view of standard mathematical theories far different from the orthodoxy. This should not be particularly surprising in itself, but what is surprising is that in a logic as weak as \textbf{B}, the addition of just \textsf{K} and a plausible seriality condition on Routley-Meyer frames goes a long way to capturing classical results. That is, insofar as one may want QE and categoricity considerations are useful for purposes (1)--(4) in \S 1, at least \textsf{K} is required (and indeed, in addition to \textsf{K} nothing beyond \textbf{B} is required). The dedicated relevantist, classical logician, or non-partisan observer can, of course, take these results negatively, as showing that irrelevance really plays a core role in standard mathematical model-theoretic reasoning. However, there is a more interesting response available, to which we have gestured. This is that even in the case of simple mathematical theories like those considered here, a logical approach maintaining relevance opens a view which, as Meyer and Mortensen claimed, ``cannot impoverish insight into the nature of mathematical structures, but rather can only enrich it'' \cite{MeyerMortensen84}.

\label{sec:con}

\section*{Acknowledgements}

There are many people deserving of thanks for input on various stages of this project. We would like to thank the audiences of the \emph{Frontiers of Non-Classicality: Logic, Mathematics, Philosophy} workshop held in Auckland in Jan. of 2016 and the \emph{Third Workshop} held in Edmonton in May of 2016 for their comments, and the organisers of these events. More particular thanks is due to Zach Weber and an anonymous referee for valuable commentary on drafts of this paper.  

Tedder: I would like to thank the Department of Philosophy at the University of Connecticut for travel funding for the Auckland and Edmonton workshops.

Badia: I acknowledge the support by the Austrian Science Fund
(FWF): project I 1923-N25 (\emph{New perspectives on residuated posets}).

\bibliography{BDLO_QE}
\bibliographystyle{amsplain}
\nocite{*}

\end{document}